\newenvironment{proofsect}[1]{\vskip0.1cm\noindent{\rmfamily\itshape #1.}}{\qed\vspace{0.15cm}}
\spnewtheorem*{Main Theorem}{Main Theorem}{\normalfont\bfseries}{\itshape}
\spnewtheorem{mylemma}[theorem]{Lemma}{\bfseries}{\itshape} 
\spnewtheorem{myproposition}[theorem]{Proposition}{\bfseries}{\itshape} 
\spnewtheorem{mycorollary}[theorem]{Corollary}{\bfseries}{\itshape} 
\spnewtheorem{mydefinition}[theorem]{Definition}{\bfseries}{\itshape} 
\spnewtheorem{myquestion}{Question}{\bfseries}{\itshape} 
\spnewtheorem{myconjecture}[myquestion]{Conjecture}{\bfseries}{\itshape} 
\numberwithin{equation}{section} \numberwithin{theorem}{section}
\newcommand{\vast}{\bBigg@{2.5}}
\newcommand{\Vast}{\bBigg@{5}}
\newcommand\setItemnumber[1]{\setcounter{enumi}{\numexpr#1-1\relax}}
\newcommand\Item[1][i]{%
	\ifx\relax#1\relax  \item \else \item[#1] \fi
	\abovedisplayskip=0pt\abovedisplayshortskip=0pt~\vspace*{-\baselineskip}}
\g@addto@macro{\definition}{\itshape}
\begin{document}

\title{Big Data Information Reconstruction on an Infinite Tree for a $4\times 4$-state Asymmetric Model with Community Effects
}

\titlerunning{Big Data Information Reconstruction on an Infinite Tree}        

\author{Wenjian Liu         \and
        Ning Ning 
}


\institute{ 	Wenjian Liu \at
	Dept.of Mathematics and Computer Science,
	Queensborough Community College, City University of New York\\
	\email{wjliu@qcc.cuny.edu}             
	\and 
	Ning Ning (Corresponding Author)\at
	Dept. of Applied Mathematics, University
	of Washington, Seattle\\
	\email{ningnin@uw.edu}\\
}

\date{}

\maketitle

\begin{abstract}
The information reconstruction problem on an infinite tree, is to collect and analyze massive data samples at the $n$th level of the tree to identify whether there is non-vanishing information of the root, as $n$ goes to infinity. This problem has wide applications in various fields such as biology, information theory and statistical physics, and its close connections to cluster learning,   data mining and deep learning have been well established in recent years. Although it has been studied in numerous contexts, the existing literatures with rigorous reconstruction thresholds established are very limited. In this paper, motivated by a classical deoxyribonucleic acid (DNA) evolution model, the F$81$ model, and taking into consideration of the Chargaff's parity rule by allowing the existence of a guanine-cytosine content bias, we study the noise channel in terms of a $4\times 4$-state asymmetric probability transition matrix with community effects, for four nucleobases of DNA. The corresponding information reconstruction problem in molecular phylogenetics is explored, by means of refined analyses of moment recursion, in-depth concentration estimates, and thorough investigations on an asymptotic $4$-dimensional nonlinear second order dynamical system. We rigorously show that the reconstruction bound is not tight when the sum of the base frequencies of adenine and thymine falls in the interval $\left(0,1/2-\sqrt{3}/6\right)\bigcup \left(1/2+\sqrt{3}/6,1\right)$, which is the first rigorous result on asymmetric noisy channels with community effects.
\keywords{Kesten-Stigum reconstruction bound \and Markov random fields on trees \and 
	Distributional recursion \and Nonlinear dynamical system}
\subclass{ 60K35 \and 82B26 \and 82B20}
\end{abstract}

\section{Introduction}
\label{intro}

\subsection{Big data and the reconstruction problem}

``Big Data", like the name implies, refers to massive data sets. 
 Finding information about the source of massive data as time evolves, is one of the toughest big data challenges. In this paper, we consider the following broadcasting process that can be considered as signals transmitting on an infinite communication tree network, as a model for propagation of a genetic property, or as a tree-indexed Markov chain.
It has two building blocks: the first is an irreducible aperiodic Markov chain on a finite characters set $\mathcal{C}$;
the second is a $d$-ary tree, which is a rooted tree with every vertex having
exactly $d$ offspring, denoted as  $\mathbb{T}=(\mathbb{V}, \mathbb{E}, \rho)$ with nodes
$\mathbb{V}$, edges $\mathbb{E}$, and root $\rho\in \mathbb{V}$. 
A configuration on $\mathbb{T}$ is an element of $\mathcal{C}^\mathbb{T}$, which is an assignment of a state in $\mathcal{C}$ to each vertex.  The state of the root $\rho$, denoted by $\sigma_\rho$, is
chosen according to an initial distribution $\pi$ on $\mathcal{C}$.  
This symbol is then propagated on the tree according to a probability transition matrix
$\mathbf{M}=(M_{ij})_{i, j \in \mathcal{C}}$, which functions as a noisy communication channel on each
edge. In other words2, for each
vertex $v$ having $u$ as its parent, the spin at $v$ is defined
according to the probabilities
$$
\mathbf{P}(\sigma_v=j\mid\sigma_u=i)=M_{i j}, \quad i, j \in \mathcal{C}.
$$

The problem of reconstruction is to analyze whether there exists a non-vanishing information on the letter transmitted by the
root, given all the symbols received at the vertices of the $n$th generation, as $n$ goes to infinity. Denote $\sigma(n)$ as the spins at distance
$n$ from the root and $\sigma^i(n)$ as $\sigma(n)$
conditioned on $\sigma_\rho = i$. In this paper, we use the following definition to mathematical formulate reconstructibility and we remark that more equivalent formulations can be seen in \cite{mossel2001reconstruction} and \cite{mossel2004survey}.
\begin{definition}
	We say that a model is \textup{\textbf{reconstructible}} on an infinite tree $\mathbb{T}$, if for some $i, j\in \mathcal{C}$
	$$
	\limsup_{n\to \infty}d_{TV}(\sigma^i(n), \sigma^j(n))>0,
	$$
	where $d_{TV}$ is the total variation distance. When the $\limsup$
	is $0$, we say that the model is \textup{\textbf{non-reconstructible}} on
	$\mathbb{T}$.
\end{definition}

\subsection{Existing results}
The reconstruction problem arises naturally in statistical physics, where the reconstruction threshold corresponds to the threshold for extremality of the infinite-volume Gibbs measure with
free boundary conditions (see \cite{georgii2011gibbs}).
The
reconstruction bound is known to have a crucial determination effect on the efficiency of the Glauber
dynamics on trees and random graphs (see \cite{berger2005glauber, martinelli2007fast, tetali2012phase}). It is specially worth mentioning that one of the classical techniques in tackling the reconstruction problem was initiated in \cite{chayes1986mean} in the context of spin-glasses.
The reconstructability is believed to play an
important role in a variety of other contexts including, but not limited
to, the following:  
phylogenetic reconstruction in evolutionary
biology (see \cite{mossel2004phase, daskalakis2006optimal, roch2006short}), communication theory in the study of noisy
computation (see \cite{evans2000broadcasting}), analogous investigations in the realm of network tomography (see \cite{bhamidi2010network}), reconstructability and distinguishability regarding the clustering problem in the stochastic block model (see \cite{mossel2013proof, mossel2014belief, neeman2014non}), analogous phase transition analysis using the cavity method in Bayesian inference (see \cite{ricci2018typology}), 
classification and labeling in a semi-supervised learning setting using deep algorithms on a closely related and well defined family of hierarchical generative models (see \cite{mossel2016deep}).
More applications can be seen in Section 1.4 of \cite{sly2009reconstruction} and Section 1.3 of \cite{liu2018tightness}, and the references therein.
 
Clearly, for any channel, the
reconstruction problem is closely related to $\lambda$, the second
largest eigenvalue in absolute value of the transition probability matrix $\mathbf{M}$. \cite{kesten1966additional,kesten1967limit} showed that
the reconstruction problem is solvable if $d\lambda^2>1$, which is known as the
Kesten-Stigum bound. However, when it comes to the case of larger noise, that is
$d\lambda^2 < 1$, retrieving root information from the symbols received at the $n$th generation, is a challenging problem whose solvability highly depends on the channel.
One important case is the binary channel, that is, the Ising model in statistical mechanics terminology, with the transition matrix
$$
\mathbf{M}= \frac{1}{2} \left(
\begin{array}{cc}
1+\theta & 1-\theta \\
1-\theta & 1+\theta \\
\end{array}
\right) + \frac{\Delta}{2}\left(
\begin{array}{cc}
-1 & 1 \\
-1 & 1 \\
\end{array}
\right),
$$
where $|\theta|+|\Delta|\leq 1$ and $\Delta$ is used to describe the deviation of $\mathbf{M}$
from the symmetric channel. 
For the binary symmetric
model, i.e. $\Delta=0$, it was shown in \cite{bleher1995purity} that the
reconstruction problem is solvable if and only if $d\lambda^2>1$. For the
binary asymmetric model,  i.e. $\Delta \neq 0$, \cite{mossel2001reconstruction, mossel2004survey}  showed that
the Kesten-Stigum bound is not the bound for reconstruction with sufficiently large asymmetry. The breakthrough result in \cite{borgs2006kesten} established the first tightness result of Keston-Stigum reconstruction bound in roughly a decade for a binary asymmetric channel on the
$d$-ary tree, provided that the asymmetry is sufficiently
small. \cite{liu2018large} gave a complete answer to the question of how small it needs to be to establish the tightness of the reconstruction threshold.

Another important case is the $q$-state symmetric channel, that is, the Potts model in statistical mechanics terminology, with the transition matrix
$$
\mathbf{M}=\left(
\begin{array}{cccccccc}
p_0 & p_1 & \cdots & p_1  \\
p_1 & p_0 & \cdots & p_1  \\
\vdots & \vdots & \ddots & \vdots \\
p_1 & p_1 & \cdots & p_0
\end{array}
\right)_{q\times q}.
$$
\cite{mossel2001reconstruction, mossel2004survey}  showed that
the Kesten-Stigum bound is not sharp in the
$q$-state symmetric
channel with sufficiently many characters.
\cite{sly2009reconstruction} proved the first exact reconstruction threshold in a nonbinary model, established the Kesten-Stigum bound for the $3$-state Potts model on regular trees of large degree,  and further showed that the Kesten-Stigum bound is not tight when $q \geq 5$, which confirms much of the picture conjectured earlier by \cite{mezard2006reconstruction}. Inspired by a popular Markov model of DNA sequence evolution, the K$80$ model (\cite{kimura1980simple}), to distinguish between transitions and transversions, \cite{liu2018tightness} analyzed the case that the transition matrix has two mutation classes and $q$ states in each class, with the transition matrix
\[\mathbf{M}=
\left( \begin{array}{@{}cccc|cccc@{}}
p_0 & p_1 & \cdots & p_1 & p_2 & \cdots & \cdots & p_2 \\
p_1 & p_0 & \cdots & p_1  & \vdots & \ddots &  & \vdots \\
\vdots & \vdots & \ddots & \vdots & \vdots &  & \ddots & \vdots \\
p_1 & p_1 & \cdots & p_0 & p_2 & \cdots & \cdots & p_2\\\hline
p_2 & \cdots & \cdots & p_2 & p_0 & p_1 & \cdots & p_1\\
\vdots & \ddots &  & \vdots & p_1 & p_0 & \cdots & p_1\\
\vdots &  & \ddots & \vdots & \vdots & \vdots & \ddots & \vdots\\
p_2 & \cdots & \cdots & p_2 & p_1 & p_1 & \cdots & p_0\\
\end{array} \right)_{2q\times 2q}.
\]
\cite{liu2018tightness} showed that when $q\geq
4$, for every $d$ the Kesten-Stigum bound is not tight, i.e. the
reconstruction is solvable for some $\lambda$ even if
$d\lambda^2<1$. 

\subsection{Motivation and the Main Theorem}
Deoxyribonucleic acid (DNA) is a molecule composed of two chains coiling around each other to form a double helix, which carries the genetic instructions used in the growth, development, functioning and reproduction of all known living organisms and many viruses. These two DNA strands are composed of simpler monomeric units called nucleotides, each of which is further composed of one of four nitrogen-containing nucleobases (guanine [G], cytosine [C], adenine [A] or thymine [T]).  A number of different Markov models of DNA sequence evolution were proposed and
 frequently used in molecular phylogenetics, a branch of phylogeny to analyze the genetic or hereditary molecular differences in order to gain information on an organism's evolutionary relationships.  Specifically, they are used during the calculation of likelihood of a tree
and used to estimate the evolutionary distance from the observed differences between sequences. For detailed descriptions of phylogenetic reconstruction and Markov models of DNA evolution, we refer to \cite{felsenstein2004inferring}.

In this paper, the objective model taken into
account is based on the F$81$ model (\cite{felsenstein1981evolutionary}), a classical DNA evolution model, whose base frequencies are allowed to vary from $0.25$ that is enforced in several other classical DNA evolution models. We allow a guanine-cytosine (G-C) content bias, where the G-C content refers to the percentage of nitrogenous bases on a DNA molecule that are either guanine or cytosine. We further follow the Chargaff's parity rule, that is, pairing nucleotides do have the same frequency on a single DNA strand, with G and C on the one hand while A and T on the other hand. Therefore, the four base frequencies can be expressed as a function of $\theta\in (0,1)$: 
$\pi_A= \pi_T  = \frac{\theta}2$ and $\pi_G= \pi_C  = \frac{1-\theta}2.$
In other word, we consider the rate matrix of $\{A, T, G, C\}$, or the configuration set $\{1, 2, 3, 4\}$, of the form
$$
\mathbf{Q}=\frac{1}2\left(
\begin{array}{cccc}
-2+\theta &\quad \theta &\quad 1-\theta &\quad 1-\theta
\\ 
\theta &\quad -2+\theta &\quad 1-\theta &\quad 1-\theta
\\
\theta &\quad \theta &\quad -1-\theta  &\quad 1-\theta
\\
\theta &\quad \theta &\quad 1-\theta
&\quad -1-\theta
\end{array}
\right).
$$
Set $\lambda=\exp\left({-\frac{v}{\frac12+\theta(1-\theta)}}\right)$, where $v$ is defined as the branch length measured in the expected number of substitutions per site, and then the corresponding probability transition matrix can be written as
$$
\mathbf{P}=\frac{1}2\left(
\begin{array}{cccc}
2\lambda+\theta(1-\lambda)&\quad \theta(1-\lambda)&\quad (1-\theta)(1-\lambda) &\quad (1-\theta)(1-\lambda) 
\\ 
\theta(1-\lambda)&\quad  2\lambda+\theta(1-\lambda)&\quad (1-\theta)(1-\lambda) &\quad (1-\theta)(1-\lambda) 
\\
\theta(1-\lambda)&\quad  \theta(1-\lambda)&\quad 2\lambda+(1-\theta)(1-\lambda) &\quad (1-\theta)(1-\lambda) 
\\
\theta(1-\lambda)&\quad  \theta(1-\lambda)&\quad (1-\theta)(1-\lambda)
&\quad 2\lambda+(1-\theta)(1-\lambda) 
\end{array}
\right).
$$
It is clear that the second eigenvalue of the channel
$\mathbf{P}$ is $\lambda$. Because non-reconstruction always holds for $d|\lambda|^2>1$, without
loss of generality, it would be convenient to presume $ d|\lambda|^2\leq 1$ in the following context.

We focus on the $4\times 4$-state probability transition matrix allowing $\theta \neq 1/2$. Note that, when $\theta=1/2$, the model degenerates to the symmetric $q$-state Potts model with $q=4$, whose reconstructibility is an open problem, while \cite{sly2009reconstruction} gave perfect answers on the reconstructibility with $q=3$ and $q\geq 5$. 
The asymmetry and community effects can be seen in the following: (1) One state stays unchanged with probability $\lambda+\frac{\theta(1-\lambda)}2$ if that state is in $\{1,2\}$, but with probability $\lambda+\frac{(1-\theta)(1-\lambda)}2 $ if that state is in $\{3,4\}$; (2) For the community $\{1,2\}$ the probability to transfer from one state to the other is $\frac{\theta(1-\lambda)}2$, while for the community $\{3,4\}$ the probability to transfer from one state to the other is $\frac{(1-\theta)(1-\lambda)}2$ for the case in $\{3,4\}$; (3) The transition probabilities from $\{1,2\}$ to $\{3,4\}$ are given by $\frac{(1-\theta)(1-\lambda)}2$, while the transition probabilities from $\{3,4\}$ to $\{1,2\}$ are given by $\frac{\theta(1-\lambda)}2$.

The main result is given below and its rigorous proof is provided in Section \ref{Sec:Proof_of_Main_Theorem}.
\begin{Main Theorem}
	\label{reconstruction} 
	When $\theta\in \left(0,\frac{3-\sqrt{3}}6\right)\bigcup \left(\frac{3+\sqrt{3}}{6},1\right)$, for
	every $d$ the Kesten-Stigum bound is not sharp. 
\end{Main Theorem}


\subsection{Proof sketch}
The ideas and techniques used to prove the Main Theorem can be seen in the following. One standard to classify reconstruction and non-reconstruction is to analyze the difference, between the probability of giving a correct guess of the root given the spins $\sigma(n)$ at distance $n$ from the root, and the probability of guessing the root according to the stationary initial distribution. Unlike the symmetric models, the model under investigation has two distinct base frequencies $\theta / 2$ and $(1-\theta) / 2$, therefore we need to analyze two different quantities $x_{n;\theta}$ (probability of giving a correct guess of the root in $\{1, 2\}$ minus $\theta / 2$) and $x_{n;1-\theta}$ (probability of giving a correct guess of the root in $\{3, 4\}$ minus $(1-\theta) / 2$). Since non-reconstruction means that the mutual information between the root and the spins at distance $n$ goes to $0$ as $n$ tends to infinity, here it can be established that non-reconstruction is equivalent to
$$
\lim_{n\to \infty}x_{n;\theta}=\lim_{n\to\infty}x_{n;1-\theta}=0.
$$
Furthermore, it is necessary to consider the quantity $y_{n;\theta}$  (resp. $y_{n;1-\theta}$) which corresponds to the probability of giving a wrong guess but right group $\{1, 2\}$ (resp. $\{3, 4\}$), and the quantity $z_{n;1-\theta}$ (resp. $z_{n;\theta}$) which corresponds to the probability of giving a wrong guess and even wrong group $\{3, 4\}$ (resp. $\{1, 2\}$). Together with their second moment forms, we have to take care of much more objective quantities than~\cite{sly2009reconstruction},~\cite{liu2018large} and~\cite{liu2018tightness}.

Through carefully analyzing the relation between the $n$th and the $(n+1)$th level, and fully taking advantage of the Markov random field property and the symmetries incorporated in the model and the tree structure, we establish the distributional recursion and moment recursion. We show that the interactions between spins become very weak if they are sufficiently far away from each other, and prove that $x_{n;\theta}$ is small and the decrease from $x_{n;\theta}$ to $x_{n+1;\theta}$ is never too large. Consequently, we are able to derive the concentration estimates and achieve the following asymptotic $4$-dimensional second order nonlinear dynamical system: 
\begin{equation*}
\begin{split}
\label{eq:XZ_dynamics}
\left\{\begin{array}{llll}
x_{n+1;\theta}&\approx &d\lambda^2x_{n;\theta}+\frac{d(d-1)}{2}\lambda^4\vast[\left(-6+\frac{2(1-\theta)}{\theta}
+2\theta\right)x_{n;\theta}^2+\left(-\frac{4\theta}{1-\theta}-16\right)(-z_{n;1-\theta})^2-(4\theta-16)x_{n;\theta}(-z_{n;1-\theta})\\
&&\quad\quad\quad\quad\quad\quad\quad\quad\quad\;\;+\theta\left(x_{n;1-\theta}^2+y_{n;1-\theta}^2\right)\vast],
\\
\
\\
-z_{n+1;1-\theta}&\approx &d\lambda^2(-z_{n;1-\theta})+\frac{d(d-1)}{2}\lambda^4\vast[\frac{2(1-\theta)^2}{\theta} x_{n;\theta}^2
+\left(-8-\frac{4\theta}{1-\theta}+\frac{4(1-\theta)}{\theta}\right)(-z_{n;1-\theta})^2-4\frac{(1-\theta)^2}{\theta}x_{n;\theta}(-z_{n;1-\theta})\\
&&\quad\quad\quad\quad\quad\quad\quad\quad\quad\quad\quad+\theta\left(x_{n;1-\theta}^2+y_{n;1-\theta}^2\right)\vast].
\end{array}
\right.
\end{split}
\end{equation*}
Methods in the previous works are to analyze the stability of fixed points of the $x$-dynamics alone, by showing that if the quadratic terms are strictly positive, given the fact that $x_{n;\theta}$ is nonnegative, it will not goes to zero as $n$ goes to infinity. However, a closer look at the above quadratic terms in the $x$-dynamics reveals that it is hard to obtain the desired results. Therefore, we turn to focus on analyzing the $z$-dynamics while fully take into consideration of the coupled relationship between $x_{n;\theta}$ and $z_{n;1-\theta}$, and conclude that when $\theta\in \left(0,\frac{3-\sqrt{3}}6\right)\bigcup \left(\frac{3+\sqrt{3}}{6},1\right)$, even if $d\lambda^2<1$ for some $\lambda$,  $z_{n;1-\theta}$ does not converge to $0$ and then $x_{n;\theta} (\geq-z_{n;1-\theta}\geq 0)$ does not converge to $0$. That is, there is reconstruction beyond the Kesten-Stigum bound. 

\subsection{Organization of the paper}
The rest of the paper is organized as follows. In Section \ref{Sec:Preliminary_Results}, we give detailed definitions and interpretations, conduct preliminary analyses, and then provide an equivalent condition for non-reconstruction.  In Section \ref{Sec:Distributional_Recursion}, we develop the second order
recursive relations associated with $x_{n+1;\theta}$ and $z_{n+1;1-\theta}$. In-depth concentration results are established in Section \ref{Sec:Concentration_Analysis}.
A complete proof of the Main Theorem is given in Section \ref{Sec:Proof_of_Main_Theorem}.

\section{Preparation}
\label{Sec:Preliminary_Results}
\subsection{Notations}
\label{Sec:Notations}
Recall that the stationary distribution $\pi=(\pi_1, \pi_2, \pi_3, \pi_4)$ of
$\mathbf{P}$ is given by
$$
\pi_1=\pi_2=\frac{\theta}{2}
\quad\textup{and}\quad
\pi_3=\pi_4=\frac{1-\theta}{2}.
$$
Let $u_1,\ldots,u_d$
be the children of root $\rho$ and $\mathbb{T}_v$ be the subtree of
descendants of $v\in \mathbb{T}$. Denote the
$n$th level of the tree by $L_n=\{v\in\mathbb{V}: d(\rho, v)=n\}$ with $d(\cdot,
\cdot)$ being the graph distance on $\mathbb{T}$.
Let $\sigma(n)$ and $\sigma_j(n)$ denote
the spins on $L_n$ and $L_n\cap \mathbb{T}_{u_j}$ respectively. For
a configuration $A$ on the spins of $L_n$, define the posterior function by
$$
f_n(i, A)=\mathbf{P}(\sigma_\rho=i\mid\sigma(n)=A), \quad i=1, 2, 3, 4.
$$
By the recursive nature of the tree, for a configuration $A$ on spins in 
$L(n+1) \cap \mathbb{T}_{u_j}$, we can give an equivalent form of
the previous one
$$
f_n(i, A)=\mathbf{P}(\sigma_{u_j}=i\mid\sigma_j(n+1)=A), \quad i=1, 2, 3, 4, \quad j=1,\cdots,d.
$$
Define $X_i$ as the posterior probability that $\sigma_{\rho}=i$ given the random configuration $\sigma(n)$ on the spins in $L_n$, that is
$$
X_i=X_i(n)=f_n(i, \sigma(n)), \quad i=1, 2, 3, 4.
$$
Then one has
$$
X_1(n)+X_2(n)+X_3(n)+X_4(n)=1
$$
and
$$
\mathbf{E}(X_1)=\mathbf{E}(X_2)=\frac\theta2,\quad \mathbf{E}(X_3)=\mathbf{E}(X_4)=\frac{1-\theta}2.
$$
Define $Y_{ij}$ as the posterior probability that $\sigma_{u_j}=i$ given the random configuration $\sigma^1_j (n + 1)$ on spins in 
$L(n+1) \cap \mathbb{T}_{u_j}$, that is
$$
Y_{ij}=Y_{ij}(n)=f_n(i, \sigma_j^1(n+1)), \quad i=1, 2, 3, 4, \quad j=1,\cdots,d,
$$
where the random variables $\{Y_{ij}\}$ are independent and identically distributed apparently. 
At last, we define the following moment variables, which will serve as the main quantities under investigation, 
$$
x_{n;\theta}=\mathbf{E}\left(f_n(1, \sigma^1(n))-\frac{\theta}{2}\right), \quad
y_{n;\theta}=\mathbf{E}\left(f_n(2, \sigma^1(n))-\frac{\theta}{2}\right), \quad
z_{n;\theta}=\mathbf{E}\left(f_n(1, \sigma^3(n))-\frac{\theta}{2}\right),
$$
$$
u_{n;\theta}=\mathbf{E}\left(f_n(1, \sigma^1(n))-\frac{\theta}{2}\right)^2,\quad
v_{n;\theta}=\mathbf{E}\left(f_n(2, \sigma^1(n))-\frac{\theta}{2}\right)^2,\quad
w_{n;\theta}=\mathbf{E}\left(f_n(1, \sigma^3(n))-\frac{\theta}{2}\right)^2,
$$
and 
$$
x_{n;1-\theta}=\mathbf{E}\left(f_n(3, \sigma^3(n))-\frac{1-\theta}{2}\right), \quad
y_{n;1-\theta}=\mathbf{E}\left(f_n(4, \sigma^3(n))-\frac{1-\theta}{2}\right), \quad
z_{n;1-\theta}=\mathbf{E}\left(f_n(3, \sigma^1(n))-\frac{1-\theta}{2}\right),
$$
$$
u_{n;1-\theta}=\mathbf{E}\left(f_n(3, \sigma^3(n))-\frac{1-\theta}{2}\right)^2,\quad
v_{n;1-\theta}=\mathbf{E}\left(f_n(4, \sigma^3(n))-\frac{1-\theta}{2}\right)^2,\quad
w_{n;1-\theta}=\mathbf{E}\left(f_n(3, \sigma^1(n))-\frac{1-\theta}{2}\right)^2.
$$

\subsection{Preliminary analyses}
Let us firstly establish some important lemmas which will be used frequently in the sequel.
\begin{lemma}
	\label{lemma1}
	For any $n\in \mathbb{N}\bigcup\{0\}$, we have
	\begin{enumerate}[(a)]
		\item $x_{n;\theta}=\frac{2}{\theta}\mathbf{E}\left(X_1(n)-\frac\theta2\right)^2=u_{n;\theta}+v_{n;\theta}+2\frac{1-\theta}{\theta}w_{n;\theta}\geq 0$.
		\item $z_{n;1-\theta}=-\frac{x_{n;\theta}+y_{n;\theta}}2\leq 0, \quad x_{n;\theta}+z_{n;1-\theta}\geq 0$;\\ $z_{n;\theta}=-\frac{x_{n;1-\theta}+y_{n;1-\theta}}2\leq 0, \quad x_{n;1-\theta}+z_{n;\theta}\geq 0$.
		\item $\theta z_{n;1-\theta}=(1-\theta)z_{n;\theta}.$
	\end{enumerate}
\end{lemma}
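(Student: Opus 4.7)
The plan is to express every quantity appearing in the lemma as a moment of the unconditional posterior vector $(X_1,X_2,X_3,X_4)$ and then combine three ingredients: the Bayes-type change-of-measure identity
\[
\mathbf{E}\!\left[g(\sigma(n))\mid \sigma_\rho = i\right] = \pi_i^{-1}\,\mathbf{E}\!\left[X_i(n)\,g(\sigma(n))\right],
\]
the $1\leftrightarrow 2$ and $3\leftrightarrow 4$ symmetries of $\mathbf{P}$ (which swap the two pairs of rows and columns without changing the law of $\sigma(n)$), and the normalization $X_1+X_2+X_3+X_4=1$.

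For part (a), I would apply the change-of-measure identity with $g=f_n(1,\cdot)$ to get $\mathbf{E}[X_1\mid\sigma_\rho=1]=(2/\theta)\mathbf{E}[X_1^2]$, so subtracting $\theta/2$ and completing the square yields $x_{n;\theta}=(2/\theta)\mathbf{E}[(X_1-\theta/2)^2]\geq 0$. The same identity expresses $u_{n;\theta}$, $v_{n;\theta}$, $w_{n;\theta}$ respectively as $(2/\theta)\mathbf{E}[X_1(X_1-\theta/2)^2]$, $(2/\theta)\mathbf{E}[X_1(X_2-\theta/2)^2]$ and $(2/(1-\theta))\mathbf{E}[X_3(X_1-\theta/2)^2]$; using the $3\leftrightarrow 4$ symmetry to replace $2X_3$ by $X_3+X_4$ and then invoking normalization gives
\[
u_{n;\theta}+v_{n;\theta}+2\frac{1-\theta}{\theta}w_{n;\theta}
=\frac{2}{\theta}\mathbf{E}\!\left[(X_1+X_2+X_3+X_4)(X_1-\theta/2)^2\right]
=x_{n;\theta}.
\]

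For part (c), I would compute $\mathbf{E}[X_1X_3]$ in two different ways: conditioning on $\sigma_\rho=1$ gives $(\theta/2)(z_{n;1-\theta}+(1-\theta)/2)$, while conditioning on $\sigma_\rho=3$ gives $((1-\theta)/2)(z_{n;\theta}+\theta/2)$, and cancelling the common constant $\theta(1-\theta)/4$ yields the assertion. For part (b), taking expectation of $X_1(X_1+X_2+X_3+X_4)=X_1$ and using the $3\leftrightarrow 4$ symmetry produces $\mathbf{E}[X_1^2]+\mathbf{E}[X_1X_2]+2\mathbf{E}[X_1X_3]=\theta/2$, which translates to $x_{n;\theta}+y_{n;\theta}+2z_{n;1-\theta}=0$. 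To obtain $z_{n;1-\theta}\leq 0$, I would use the $1\leftrightarrow 2$ symmetry and Jensen to bound $2(\mathbf{E}[X_1^2]+\mathbf{E}[X_1X_2])=\mathbf{E}[(X_1+X_2)^2]\geq \theta^2$, which gives $\mathbf{E}[X_1X_3]\leq \theta(1-\theta)/4$ and hence $z_{n;1-\theta}\leq 0$. Finally, $x_{n;\theta}+z_{n;1-\theta}$ simplifies by the same symmetry to $(1/(2\theta))\mathbf{E}[(X_1-X_2)^2]\geq 0$.

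The main obstacle is bookkeeping: once the change-of-measure identity is available, every claim reduces either to an algebraic identity forced by normalization, to a moment rewriting via one of the two symmetries, or to a one-line Jensen estimate. The essential care lies in tracking which symmetry is invoked where, so that the six moment quantities $u,v,w,x,y,z$ with both indices $\theta$ and $1-\theta$ are all accounted for consistently.
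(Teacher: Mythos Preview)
Your approach is correct and essentially parallels the paper's: both rest on the Bayes change-of-measure identity together with the $1\leftrightarrow2$ and $3\leftrightarrow4$ symmetries. The paper organizes part~(a) by conditioning $\mathbf{E}(X_1-\theta/2)^2$ on $\sigma_\rho$, which is exactly the dual of your rewriting of $u,v,w$ as $X_i$-weighted unconditional moments; for part~(b) it obtains both inequalities at once from Cauchy--Schwarz on $\mathbf{E}(X_1-\theta/2)(X_2-\theta/2)$, yielding $|y_{n;\theta}|\le x_{n;\theta}$, whereas you split into a Jensen estimate for $z_{n;1-\theta}\le 0$ and the explicit square $\mathbf{E}(X_1-X_2)^2$ for $x_{n;\theta}+z_{n;1-\theta}\ge 0$. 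One small bookkeeping slip to fix: in your part~(a), converting $v_{n;\theta}=(2/\theta)\mathbf{E}[X_1(X_2-\theta/2)^2]$ into the $X_2(X_1-\theta/2)^2$ contribution needed for the sum $(X_1+X_2+X_3+X_4)(X_1-\theta/2)^2$ also requires the $1\leftrightarrow2$ symmetry, not only $3\leftrightarrow4$.
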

\begin{proof}
	\begin{enumerate}[(a)]
		\item By the law of total probability and Bayes' theorem, we have
		\begin{eqnarray*}
			\mathbf{E}f_n(1, \sigma^1(n))
			&=&\sum_A f_n(1,A)\mathbf{P}(\sigma(n)=A\mid\sigma_\rho=1)
			\\
			&=&\frac2{\theta}\sum_A\mathbf{P}(\sigma_\rho=1\mid\sigma(n)=A)\mathbf{P}(\sigma(n)=A)f_n(1,A)
			\\
			&=&\frac2{\theta}\sum_Af_n^2(1,A)\mathbf{P}(\sigma(n)=A)
			\\
			&=&\frac2{\theta}\mathbf{E}(X_1^2).
		\end{eqnarray*}
		Then it follows from the fact $\mathbf{E}(X_1)=\frac\theta2$ that
		$$
		x_{n;\theta}=\frac{2}{\theta}\left(\mathbf{E}(X_1^2)-\left(\frac{\theta}{2}\right)^2\right)=\frac{2}{\theta}\mathbf{E}\left(X_1-\frac\theta2\right)^2.
		$$
		Furthermore, by the law of total expectation, we have
		\begin{eqnarray*}
			x_{n;\theta}&=&\frac{2}{\theta}\mathbf{E}\left(X_1-\frac{\theta}{2}\right)^2
			\\
			&=&\frac{2}{\theta}\sum_{i=1}^4\mathbf{E}\left(\left(X_1-\frac\theta2\right)^2\mid\sigma_\rho=i\right)\mathbf{P}(\sigma_\rho=i)
			\\
			&=&\frac{2}{\theta}\left[\mathbf{P}(\sigma_\rho=1)\mathbf{E}\left(f_n(1, \sigma^1(n))-\frac\theta2\right)^2+\mathbf{P}(\sigma_\rho=2)\mathbf{E}\left(f_n(1, \sigma^2(n))-\frac\theta2\right)^2\right.
			\\
			&&\quad+\left.\mathbf{P}(\sigma_\rho=3)\mathbf{E}\left(f_n(1, \sigma^3(n))-\frac\theta2\right)^2
			+\mathbf{P}(\sigma_\rho=4)\mathbf{E}\left(f_n(1, \sigma^4(n))-\frac\theta2\right)^2\right]
			\\
			&=&u_{n;\theta}+v_{n;\theta}+2\frac{1-\theta}{\theta}w_{n;\theta}.
		\end{eqnarray*}

		\item Similarly, we have 
		\begin{eqnarray*}
			y_{n;\theta}+\frac{\theta}{2}
			&=&\sum_A f_n(2,A)\mathbf{P}(\sigma(n)=A\mid\sigma_\rho=1)\\
			&=&\frac2{\theta}\sum_Af_n(1,A)f_n(2,A)\mathbf{P}(\sigma(n)=A)\\
			&=&\frac2\theta\mathbf{E}\left(X_1X_2\right),
		\end{eqnarray*}
		and then
		\begin{equation}
		\label{y}
		y_{n;\theta}=\frac2\theta\mathbf{E}\left(X_1-\frac\theta2\right)\left(X_2-\frac\theta2\right).
		\end{equation}
		It follows from the Cauchy-Schwarz inequality that
		$$
		\left[\mathbf{E}\left(X_1-\frac\theta2\right)\left(X_2-\frac\theta2\right)\right]^2\leq\mathbf{E}\left(X_1-\frac\theta2\right)^2\mathbf{E}\left(X_2-\frac\theta2\right)^2,
		$$
		which implies 
		\begin{equation}
		\label{cauchy}
		\left(\frac\theta2y_{n;\theta}\right)^2\leq\left(\frac\theta2x_{n;\theta}\right)^2, \quad\textup{i.e. } -x_{n;\theta}\leq y_{n;\theta}\leq x_{n;\theta}.
		\end{equation}
		By the definitions of $x_{n;\theta}$, $y_{n;\theta}$ and $z_{n;1-\theta}$, 
		we know that
		$z_{n;1-\theta}=-\frac{x_{n;\theta}+y_{n;\theta}}2$, and thus \eqref{cauchy} implies $$x_{n;\theta}+z_{n;1-\theta}=x_{n;\theta}-\frac{x_{n;\theta}+y_{n;\theta}}{2}=\frac{x_{n;\theta}-y_{n;\theta}}{2}\geq 0 \quad \text{and} \quad z_{n;1-\theta}\leq 0.$$
		An analogous proof of $$z_{n;\theta}=-\frac{x_{n;1-\theta}+y_{n;1-\theta}}2\leq 0 \quad \text{and} \quad x_{n;1-\theta}+z_{n;\theta}\geq 0$$
		can be easily carried out. 
		\item Similarly, we have 	
		\begin{equation}
		\label{zn}
		z_{n;1-\theta}=\mathbf{E}\left(f_n(3, \sigma^1(n))-\frac{1-\theta}2\right)=\frac2{\theta}\mathbf{E}\left(X_1X_3\right)-\frac{1-\theta}2
		=\frac2{\theta}\mathbf{E}\left(X_1-\frac{\theta}{2}\right)\left(X_3-\frac{1-\theta}{2}\right)
		\end{equation}
		and then
		$$
		\theta z_{n;1-\theta}=2\mathbf{E}\left(X_1-\frac{\theta}{2}\right)\left(X_3-\frac{1-\theta}{2}\right)=(1-\theta)z_{n;\theta}.
		$$ 
	\end{enumerate}
\end{proof}

\begin{lemma}
	\label{xnun} For any $n\in \mathbb{N}\cup\{0\}$, we have
	\begin{enumerate}[(a)]
		
		\item $\mathbf{E}\left(f_n(1, \sigma^1(n))-\frac{\theta}{2}\right)\left(f_n(2, \sigma^1(n))-\frac{\theta}{2}\right)=\frac{\theta}{2}y_{n;\theta}+\left(v_{n;\theta}-\frac{\theta}{2}x_{n;\theta}\right)$.
		
		\item $\mathbf{E}\left(f_n(1, \sigma^1(n))-\frac{\theta}{2}\right)\left(f_n(3, \sigma^1(n))-\frac{1-\theta}{2}\right)=\frac{\theta}{2}z_{n;1-\theta}-\frac12\left(u_{n;\theta}-\frac{\theta}{2}x_{n;\theta}\right)-\frac12\left(v_{n;\theta}-\frac{\theta}{2}x_{n;\theta}\right)$.
		
		\item $\mathbf{E}\left(f_n(2, \sigma^1(n))-\frac{\theta}{2}\right)\left(f_n(3, \sigma^1(n))-\frac{1-\theta}{2}\right)=\frac{\theta}{2}z_{n;1-\theta}-\left(v_{n;\theta}-\frac{\theta}{2}x_{n;\theta}\right)$.

		\item $\mathbf{E}\left(f_n(3, \sigma^1(n))-\frac{1-\theta}{2}\right)\left(f_n(4, \sigma^1(n))-\frac{1-\theta}{2}\right)
		=\frac{(1-\theta) }{2}y_{n;1-\theta}+\frac12\left(u_{n;\theta}-\frac{\theta }{2}x_{n;\theta}\right)+\frac32\left(v_{n;\theta}-\frac{\theta }{2}x_{n;\theta}\right)-\left(w_{n;1-\theta}-\frac{1-\theta}{2}x_{n;1-\theta}\right)$.
		
		\item $\mathbf{E}\left(f_n(1, \sigma^3(n))-\frac{\theta}{2}\right)\left(f_n(2, \sigma^3(n))-\frac{\theta}{2}\right)=\frac{\theta}{2}y_{n;\theta}-\frac{\theta}{1-\theta}\left(v_{n;\theta}-\frac{\theta }{2}x_{n;\theta}\right)$.
	\end{enumerate}
\end{lemma}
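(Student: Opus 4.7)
The plan is to handle all five identities by a single recipe that reduces them to algebraic identities among third central moments. Write $a=X_1-\theta/2$, $b=X_2-\theta/2$, $c=X_3-(1-\theta)/2$, $d=X_4-(1-\theta)/2$. For each claim I first apply Bayes' theorem exactly as in the proof of Lemma \ref{lemma1}: conditioning on $\sigma_\rho=i$ replaces the measure by $\frac{2}{\pi_i}f_n(i,\cdot)\mathbf{P}(\sigma(n)=\cdot)$, so the conditional expectation becomes an unconditional expectation with an extra factor $X_i$. Then I split $X_i=(X_i-\pi_i)+\pi_i$: the ``$\pi_i$'' piece reproduces a centered second moment that is already identified in Lemma \ref{lemma1} (giving the $\frac{\theta}{2}y_{n;\theta}$, $\frac{\theta}{2}z_{n;1-\theta}$, or $\frac{1-\theta}{2}y_{n;1-\theta}$ term on the right-hand side), while the ``$X_i-\pi_i$'' piece produces a third central moment of $a,b,c,d$. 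The same Bayes-plus-centering trick, applied to the moments $u_{n;\theta}$, $v_{n;\theta}$, $w_{n;1-\theta}$ themselves, gives
\[
u_{n;\theta}-\tfrac{\theta}{2}x_{n;\theta}=\tfrac{2}{\theta}\mathbf{E}[a^3], \qquad v_{n;\theta}-\tfrac{\theta}{2}x_{n;\theta}=\tfrac{2}{\theta}\mathbf{E}[ab^2], \qquad w_{n;1-\theta}-\tfrac{1-\theta}{2}x_{n;1-\theta}=\tfrac{2}{\theta}\mathbf{E}[ac^2],
\]
so after this opening step each claim becomes an algebraic identity among $\mathbf{E}[a^3]$, $\mathbf{E}[a^2 b]$, $\mathbf{E}[a^2 c]$, $\mathbf{E}[abc]$, $\mathbf{E}[ac^2]$, and $\mathbf{E}[acd]$.

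The two tools for closing those identities are the community symmetries $1\leftrightarrow 2$ and $3\leftrightarrow 4$ built into the transition matrix $\mathbf{P}$ (yielding $\mathbf{E}[a^2 b]=\mathbf{E}[ab^2]$, $\mathbf{E}[a^2 c]=\mathbf{E}[a^2 d]$, $\mathbf{E}[abc]=\mathbf{E}[abd]$, etc.) and the linear constraint $a+b+c+d=0$ coming from $X_1+X_2+X_3+X_4=1$. Combining them I would derive the two master reductions $\mathbf{E}[a^2 c]=-\tfrac{1}{2}\mathbf{E}[a^3]-\tfrac{1}{2}\mathbf{E}[a^2 b]$ (substitute $c+d=-(a+b)$ into $\mathbf{E}[a^2(c+d)]$ and use $3\leftrightarrow 4$) and $\mathbf{E}[abc]=-\mathbf{E}[a^2 b]$ (substitute $c+d=-(a+b)$ into $\mathbf{E}[ab(c+d)]$ and use both symmetries). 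Part (a) now follows by recognising $\mathbf{E}[a^2 b]=\mathbf{E}[ab^2]$; part (c) follows from $\mathbf{E}[abc]=-\mathbf{E}[a^2 b]$; part (b) follows from the $\mathbf{E}[a^2 c]$ reduction; and part (e), whose opening Bayes step uses $\pi_3=(1-\theta)/2$ instead of $\theta/2$, again reduces to $\mathbf{E}[abc]=-\mathbf{E}[a^2 b]$ but with an extra overall factor $\frac{2}{1-\theta}\cdot(-\tfrac{\theta}{2})=-\frac{\theta}{1-\theta}$, exactly producing the asymmetric coefficient in the stated right-hand side.

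The most delicate piece will be part (d), since conditioning on $\sigma_\rho=1$ (community $\{1,2\}$) while evaluating at the opposite community $\{3,4\}$ produces the mixed moment $\mathbf{E}[acd]$, which is killed by neither the $1\leftrightarrow 2$ nor the $3\leftrightarrow 4$ symmetry alone. Here I would substitute $d=-a-b-c$ into this moment, expand $\mathbf{E}[acd]=-\mathbf{E}[a^2 c]-\mathbf{E}[abc]-\mathbf{E}[ac^2]$, and then insert the reductions from the previous paragraph together with $\mathbf{E}[a^2 b]=\mathbf{E}[ab^2]$; after collecting terms this rearranges exactly to $\tfrac{1}{2}\mathbf{E}[a^3]+\tfrac{3}{2}\mathbf{E}[ab^2]-\mathbf{E}[ac^2]$, giving the stated coefficients $\tfrac{1}{2}, \tfrac{3}{2}, -1$ in front of $u_{n;\theta}-\frac{\theta}{2}x_{n;\theta}$, $v_{n;\theta}-\frac{\theta}{2}x_{n;\theta}$, and $w_{n;1-\theta}-\frac{1-\theta}{2}x_{n;1-\theta}$. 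The only real obstacle throughout is the bookkeeping of signs and which community each centered variable belongs to, but once the $a,b,c,d$ notation and the two symmetries are set up, the whole lemma reduces to elementary substitution.
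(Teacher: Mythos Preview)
Your proposal is correct and the reductions check out: the Bayes step gives the ``leading'' term $\tfrac{\theta}{2}y_{n;\theta}$, $\tfrac{\theta}{2}z_{n;1-\theta}$ or $\tfrac{1-\theta}{2}y_{n;1-\theta}$, the identifications $u_{n;\theta}-\tfrac{\theta}{2}x_{n;\theta}=\tfrac{2}{\theta}\mathbf{E}[a^3]$, $v_{n;\theta}-\tfrac{\theta}{2}x_{n;\theta}=\tfrac{2}{\theta}\mathbf{E}[ab^2]$, $w_{n;1-\theta}-\tfrac{1-\theta}{2}x_{n;1-\theta}=\tfrac{2}{\theta}\mathbf{E}[ac^2]$ are correct, and the two master reductions from $a+b+c+d=0$ plus the community symmetries indeed close each case, including part~(d) where your computation $\mathbf{E}[acd]=\tfrac{1}{2}\mathbf{E}[a^3]+\tfrac{3}{2}\mathbf{E}[ab^2]-\mathbf{E}[ac^2]$ reproduces the stated $\tfrac12,\tfrac32,-1$ coefficients exactly.

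The paper's proof uses the same raw ingredients (Bayes, the constraint $\sum_i X_i=1$, and the $1\!\leftrightarrow\!2$, $3\!\leftrightarrow\!4$ symmetries) but deploys them ad hoc at the level of the conditional random variables rather than systematically at the level of unconditional third moments. For~(a) it uses the swap identity $\mathbf{E}\bigl[f_n(1,\sigma^1)f_n(2,\sigma^1)\bigr]=\mathbf{E}\bigl[f_n(2,\sigma^1)^2\bigr]$, which when unpacked is exactly your $\mathbf{E}[a^2 b]=\mathbf{E}[ab^2]$; for~(c) it squares the relation $(b+c)=-(a+d)$; for~(d) it substitutes $d=-(a+b+c)$ inside the conditional expectation and invokes parts~(b),~(c); for~(e) it decomposes $\mathbf{E}[ab]$ over the root value. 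Your route is genuinely more uniform: one opening move converts every claim into an identity among the six third moments $\mathbf{E}[a^3],\mathbf{E}[ab^2],\mathbf{E}[a^2c],\mathbf{E}[abc],\mathbf{E}[ac^2],\mathbf{E}[acd]$, and the two master reductions then kill all of them at once. This makes the bookkeeping mechanical and the coefficients in~(d) transparent, whereas the paper's argument requires a different trick per item. Both approaches are of the same depth; yours trades a small upfront setup (the moment dictionary) for a cleaner, single-recipe execution.
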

\begin{proof}
	\begin{enumerate}[(a)]
		\item By the law of total probability, one has
		\begin{equation*}
		\begin{aligned}
		\mathbf{E}f_n(1,\sigma^1(n))f_n(2,\sigma^1(n))&=\sum_{A}\mathbf{P}(\sigma_\rho=1\mid\sigma(n)=A)\mathbf{P}(\sigma_\rho=2\mid\sigma(n)=A)\mathbf{P}(\sigma(n)=A\mid\sigma_\rho=1)
		\\
		&=\sum_{A}\left[\mathbf{P}(\sigma_\rho=2\mid\sigma(n)=A)\right]^2\mathbf{P}(\sigma(n)=A\mid\sigma_\rho=1)
		\\
		&=\mathbf{E}\left(f_n(2,\sigma^1(n))\right)^2,
		\end{aligned}
		\end{equation*}
		and therefore,
		\begin{equation*}
		\begin{aligned}
		\mathbf{E}\left(f_n(1,\sigma^1(n))-\frac{\theta}{2}\right)\left(f_n(2,\sigma^1(n))-\frac{\theta}{2}\right)&=\mathbf{E}\left(f_n(2,\sigma^1(n))-\frac{\theta}{2}\right)^2+\frac{\theta(y_{n;\theta}-x_{n;\theta})}{2}\\
		&=v_{n;\theta}+\frac{\theta}{2}\left(y_{n;\theta}-x_{n;\theta}\right)
		\\
		&=\frac{\theta}{2}y_{n;\theta}+\left(v_{n;\theta}-\frac{\theta}{2}x_{n;\theta}\right).
		\end{aligned}
		\end{equation*}
		\item Similarly, we can achieve that
		\begin{equation*}
		\begin{aligned}
		\mathbf{E}\left(f_n(1, \sigma^1(n))-\frac{\theta}{2}\right)\left(f_n(3, \sigma^1(n))-\frac{1-\theta}{2}\right)&=\frac{\theta z_{n;1-\theta}}{2}-\frac12\left(u_{n;\theta}-\frac{\theta x_{n;\theta}}{2}\right)-\frac12\left(v_{n;\theta}-\frac{\theta x_{n;\theta}}{2}\right)\\
		&=\frac{\theta}{2}(x_{n;\theta}+z_{n;1-\theta})-\frac{u_{n;\theta}+v_{n;\theta}}{2}
		\\
		&=\frac{\theta}{2}z_{n;1-\theta}-\frac12\left(u_{n;\theta}-\frac{\theta}{2}x_{n;\theta}\right)-\frac12\left(v_{n;\theta}-\frac{\theta}{2}x_{n;\theta}\right).
		\end{aligned}
		\end{equation*}
		
		\item Note that
		$$
		\left(f_n(2,\sigma^1(n))-\frac{\theta}{2}\right)+\left(f_n(3,\sigma^1(n))-\frac{1-\theta}{2}\right)=-\left(f_n(1,\sigma^1(n))-\frac{\theta}{2}\right)-\left(f_n(4,\sigma^1(n))-\frac{1-\theta}{2}\right),
		$$
		and by taking square of both sides and then expectation, one has
		\begin{equation*}
		\begin{aligned}
		\mathbf{E}\left(f_n(2, \sigma^1(n))-\frac{\theta}{2}\right)\left(f_n(3, \sigma^1(n))-\frac{1-\theta}{2}\right)
		=&\frac{u_{n;\theta}-v_{n;\theta}}2+\mathbf{E}\left(f_n(1, \sigma^1(n))-\frac{\theta}{2}\right)\left(f_n(4, \sigma^1(n))-\frac{1-\theta}{2}\right)
		\\
		=&\frac{u_{n;\theta}-v_{n;\theta}}2+\frac{\theta z_{n;1-\theta}}{2}-\frac12\left(u_{n;\theta}-\frac{\theta x_{n;\theta}}{2}\right)-\frac12\left(v_{n;\theta}-\frac{\theta x_{n;\theta}}{2}\right)
		\\
		=&\frac{\theta }{2}\left( x_{n;\theta}+z_{n;1-\theta}\right)-v_{n;\theta}
		\\
		=&\frac{\theta}{2}z_{n;1-\theta}-\left(v_{n;\theta}-\frac{\theta}{2}x_{n;\theta}\right).
		\end{aligned}
		\end{equation*}
		\item	Similarly, we have
		\begin{eqnarray*}
			&&\mathbf{E}\left(f_n(3, \sigma^1(n))-\frac{1-\theta}{2}\right)\left(f_n(4, \sigma^1(n))-\frac{1-\theta}{2}\right)\\&=&-\mathbf{E}\left(f_n(3, \sigma^1(n))-\frac{1-\theta}{2}\right)\left(f_n(1, \sigma^1(n))-\frac{\theta}{2}\right)-\mathbf{E}\left(f_n(3, \sigma^1(n))-\frac{1-\theta}{2}\right)\left(f_n(2, \sigma^1(n))-\frac{\theta}{2}\right)-\mathbf{E}\left(f_n(3, \sigma^1(n))-\frac{1-\theta}{2}\right)^2
			\\
			&=&-\frac{\theta z_{n;1-\theta}}{2}+\frac12\left(u_{n;\theta}-\frac{\theta x_{n;\theta}}{2}\right)+\frac12\left(v_{n;\theta}-\frac{\theta x_{n;\theta}}{2}\right)-\frac{\theta z_{n;1-\theta}}{2}+\left(v_{n;\theta}-\frac{\theta x_{n;\theta}}{2}\right)-w_{n;1-\theta}
			\\
			&=&\frac{(1-\theta) y_{n;1-\theta}}{2}+\frac12\left(u_{n;\theta}-\frac{\theta x_{n;\theta}}{2}\right)+\frac32\left(v_{n;\theta}-\frac{\theta x_{n;\theta}}{2}\right)-\left(w_{n;1-\theta}-\frac{(1-\theta)x_{n;1-\theta}}{2}\right).
		\end{eqnarray*}
		
		\item	By equation \eqref{y}, we have
		\begin{eqnarray*}
			\frac{\theta}{2}y_{n;\theta}&=&\mathbf{E}\left(X_1-\frac\theta2\right)\left(X_2-\frac\theta2\right)
			\\
			&=&\theta\mathbf{E}\left(f_n(1, \sigma^1(n))-\frac{\theta}{2}\right)\left(f_n(2, \sigma^1(n))-\frac{\theta}{2}\right)+(1-\theta)\mathbf{E}\left(f_n(1, \sigma^3(n))-\frac{\theta}{2}\right)\left(f_n(2, \sigma^3(n))-\frac{\theta}{2}\right)
			\\
			&=&\frac{\theta^2}{2}y_{n;\theta}+\theta\left(v_{n;\theta}-\frac{\theta x_{n;\theta}}{2}\right)+(1-\theta)\mathbf{E}\left(f_n(1, \sigma^3(n))-\frac{\theta}{2}\right)\left(f_n(2, \sigma^3(n))-\frac{\theta}{2}\right),
		\end{eqnarray*}
		which implies
		$$
		\mathbf{E}\left(f_n(1, \sigma^3(n))-\frac{\theta}{2}\right)\left(f_n(2, \sigma^3(n))-\frac{\theta}{2}\right)=\frac{\theta}{2}y_{n;\theta}-\frac{\theta}{1-\theta}\left(v_{n;\theta}-\frac{\theta x_{n;\theta}}{2}\right).
		$$
	\end{enumerate}
\end{proof}


Recall that $Y_{ij}(n) = f_n\left(i, \sigma^1_j (n + 1)\right)$ is simply the posterior probability that $\sigma_{u_j}=i$ given the random configuration $\sigma^1_j (n + 1)$ on spins in 
$L(n+1) \cap \mathbb{T}_{u_j}$.
By the symmetry of the model, the random vectors $(Y_{ij})_{i=1}^{4}$
are independent. The central moments of $Y_{ij}$ would play a key role in further analysis, and therefore it is necessary to figure them out in the first place.
 
\begin{lemma}
	\label{lemma:Yproperties} For each $1\leq j\leq d$, we have
	\begin{enumerate}[(a)]
	\item $\mathbf{E}\left(Y_{1j}(n)-\frac{\theta}{2}\right)=\lambda x_{n;\theta}$.
	
	\item $\mathbf{E}\left(Y_{2j}(n)-\frac{\theta}{2}\right)=\lambda y_{n;\theta}$.
	
	\item $\mathbf{E}\left(Y_{ij}(n)-\frac{1-\theta}{2}\right)=\lambda z_{n;1-\theta}, \quad i=3,4.
	$
	
	\item $\!
	\begin{aligned}[t]
	\mathbf{E}\left(Y_{1j}(n)-\frac{\theta}{2}\right)^2
	=\frac{\theta}{2}x_{n;\theta}+\lambda\left(u_{n;\theta}-\frac{\theta}{2}x_{n;\theta}\right).
	\end{aligned}
	$

	\item $
	\!
	\begin{aligned}[t]
	\mathbf{E}\left(Y_{2j}(n)-\frac{\theta}{2}\right)^2
	=\frac{\theta}{2}x_{n;\theta}+\lambda\left(v_{n;\theta}-\frac{\theta}{2}x_{n;\theta}\right).
	\end{aligned}
	$
	
	\item 
	$\mathbf{E}\left(Y_{ij}(n)-\frac{1-\theta}{2}\right)^2
	=\frac{1-\theta}{2}x_{n;1-\theta}+\lambda\left(w_{n;1-\theta}-\frac{1-\theta}{2}x_{n;1-\theta}\right), \quad i=3,4.
	$
	
	\item $
	\!
	\begin{aligned}[t]
	\mathbf{E}\left(Y_{1j}(n)-\frac{\theta}{2}\right)\left(Y_{2j}(n)-\frac{\theta}{2}\right)
	=\frac{\theta}{2}y_{n;\theta}+\lambda\left(v_{n;\theta}-\frac{\theta}{2}x_{n;\theta}\right).
	\end{aligned}
	$
	
	\item 
$\mathbf{E}\left(Y_{1j}(n)-\frac{\theta}{2}\right)\left(Y_{ij}(n)-\frac{1-\theta}{2}\right)=\frac{\theta z_{n;1-\theta}}{2}-\frac\lambda2\left(u_{n;\theta}-\frac{\theta}{2}x_{n;\theta}\right)-\frac\lambda2\left(v_{n;\theta}-\frac{\theta}{2}x_{n;\theta}\right), \quad i=3,4$.
	
	\item 
	$\mathbf{E}\left(Y_{2j}(n)-\frac{\theta}{2}\right)\left(Y_{ij}(n)-\frac{1-\theta}{2}\right)
	=\frac{\theta z_{n;1-\theta}}{2}-\lambda\left(v_{n;\theta}-\frac{\theta}{2}x_{n;\theta}\right), \quad i=3,4.
	$
	
	\item $
	\!
	\begin{aligned}[t]
	\mathbf{E}\left(Y_{3j}(n)-\frac{1-\theta}{2}\right)\left(Y_{4j}(n)-\frac{1-\theta}{2}\right)
	=&\frac{1-\theta}{2}y_{n;1-\theta}+\frac\lambda2\left(u_{n;\theta}-\frac{\theta}{2}x_{n;\theta}\right)+\frac{3\lambda}2\left(v_{n;\theta}-\frac{\theta}{2}x_{n;\theta}\right)-\lambda\left(w_{n;1-\theta}-\frac{1-\theta}{2}x_{n;1-\theta}\right).
	\end{aligned}
	$
\end{enumerate}
\end{lemma}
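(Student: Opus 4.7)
The plan is to exploit the Markov random field structure of the tree. Conditional on $\sigma_{u_j}=k$, the subtree $\mathbb{T}_{u_j}$ is independent of the remainder of $\mathbb{T}$ and, by the recursive structure of the tree, the process restricted to $\mathbb{T}_{u_j}$ is distributionally identical to the original depth-$n$ process with root state $k$. Writing $\mathbf{P}_{1k}$ for the $(1,k)$-entry of the channel $\mathbf{P}$, this yields the master identity
\[
\mathbf{E}\bigl[g(Y_{1j},Y_{2j},Y_{3j},Y_{4j})\bigr] \;=\; \sum_{k=1}^{4} \mathbf{P}_{1k}\,\mathbf{E}\bigl[g\bigl(f_n(1,\sigma^k(n)),f_n(2,\sigma^k(n)),f_n(3,\sigma^k(n)),f_n(4,\sigma^k(n))\bigr)\bigr]
\]
for every measurable $g$. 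Each of the ten identities (a)--(j) is an instance of this formula for a specific linear or quadratic $g$, so the entire proof reduces to substituting the explicit entries of $\mathbf{P}$ into the right-hand side and simplifying with the help of Lemmas~\ref{lemma1} and~\ref{xnun}.

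For the three first-moment identities (a)--(c), I would choose $g$ to be $y_1-\theta/2$, $y_2-\theta/2$, and $y_3-(1-\theta)/2$ respectively. The $1\!\leftrightarrow\!2$ symmetry of the model identifies $\mathbf{E}[f_n(1,\sigma^2(n))-\theta/2]$ with $y_{n;\theta}$, and the $3\!\leftrightarrow\!4$ symmetry identifies $\mathbf{E}[f_n(1,\sigma^3(n))-\theta/2]=\mathbf{E}[f_n(1,\sigma^4(n))-\theta/2]=z_{n;\theta}$. Inserting the explicit entries of $\mathbf{P}$, the coefficient of $\lambda$ isolates the claimed main term $\lambda x_{n;\theta}$, while the residual $(1-\lambda)$-part assembles into $\tfrac{\theta(1-\lambda)}{2}(x_{n;\theta}+y_{n;\theta})+(1-\theta)(1-\lambda)z_{n;\theta}$. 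Converting $x_{n;\theta}+y_{n;\theta}=-2z_{n;1-\theta}$ by Lemma~\ref{lemma1}(b) and $(1-\theta)z_{n;\theta}=\theta z_{n;1-\theta}$ by Lemma~\ref{lemma1}(c) makes this residual cancel exactly. Parts (b) and (c) follow by the same template with the obvious relabellings.

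For the three pure-square identities (d)--(f) I would take $g=(y_i-\pi_i)^2$; each summand of the master identity is then already one of $u_{n;\theta},v_{n;\theta},w_{n;\theta}$ or their $(1-\theta)$-analogues. Substituting the entries of $\mathbf{P}$ and using the relation $u_{n;\theta}+v_{n;\theta}+\tfrac{2(1-\theta)}{\theta}w_{n;\theta}=x_{n;\theta}$ from Lemma~\ref{lemma1}(a) eliminates $w_{n;\theta}$ and collapses the expression to $\tfrac{\theta}{2}x_{n;\theta}+\lambda\bigl(u_{n;\theta}-\tfrac{\theta}{2}x_{n;\theta}\bigr)$. For the four cross-moment identities (g)--(j), the master identity produces a $\mathbf{P}_{1k}$-weighted sum of cross expectations of $f_n$ under each $\sigma^k(n)$; these are precisely the quantities computed in Lemma~\ref{xnun}, with the $k=2,4$ summands obtained from the $1\!\leftrightarrow\!2$ and $3\!\leftrightarrow\!4$ symmetries. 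Plugging in those closed forms and applying Lemma~\ref{lemma1}(b)--(c) to cancel the spurious $(1-\lambda)$-terms yields the stated expressions.

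The principal obstacle is bookkeeping rather than conceptual: each of the ten identities expands into a four-term $k$-sum, and each cross-moment term expands further into three or four subexpressions from Lemma~\ref{xnun}, so one must keep careful track of which symmetry is invoked at each appearance of $f_n(i,\sigma^k(n))$ and which quantity among $u,v,w$ and cross moments each summand reduces to. The cancellation mechanism, however, is uniform --- every residual $(1-\lambda)$-contribution gathers into an instance of $x_{n;\theta}+y_{n;\theta}+2z_{n;1-\theta}=0$ or $(1-\theta)z_{n;\theta}-\theta z_{n;1-\theta}=0$ --- so once the computation is carried out in detail for (a) and (d), the remaining eight cases follow by the same pattern.
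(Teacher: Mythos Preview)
Your approach is correct and essentially identical to the paper's: condition on $\sigma_{u_j}$ to obtain the master identity, substitute the explicit entries of $\mathbf{P}$, and simplify using Lemmas~\ref{lemma1} and~\ref{xnun} together with the $1\!\leftrightarrow\!2$ and $3\!\leftrightarrow\!4$ symmetries. Two minor points of difference: for (c) the paper shortcuts via the constraint $\sum_i Y_{ij}=1$ rather than reapplying the master identity, and for (h) and (j) the $k=3,4$ cross moments (e.g.\ $\mathbf{E}(f_n(1,\sigma^3)-\tfrac\theta2)(f_n(3,\sigma^3)-\tfrac{1-\theta}2)$) are not literally among the items of Lemma~\ref{xnun}, so the paper derives the needed combination on the spot from the total-expectation identity for $z_{n;1-\theta}$---a small extra step your outline should anticipate.
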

	
\begin{proof} In the following, we only prove some results in Lemma \ref{lemma:Yproperties} and the rest can be shown analogously.
	\begin{enumerate}[(a)]
	\item $
	\!
	\begin{aligned}[t]
	\mathbf{E}\left(Y_{1j}(n)-\frac{\theta}{2}\right)
	&=p_{11}\mathbf{E}\left(f_n(1,\sigma^1(n))-\frac{\theta}{2}\right)+p_{12}\mathbf{E}\left(f_n(1, \sigma^2(n))-\frac{\theta}{2}\right)+p_{13}\mathbf{E}\left(f_n(1, \sigma^3(n))-\frac{\theta}{2}\right)
+p_{14}\mathbf{E}\left(f_n(1, \sigma^4(n))-\frac{\theta}{2}\right)
	\\
	&=\left(\lambda+\frac{\theta(1-\lambda)}{2}\right)x_{n;\theta}+\frac{\theta(1-\lambda)}{2}y_{n;\theta}+(1-\theta)(1-\lambda)z_{n;\theta}
	\\
	&=\lambda x_{n;\theta}.
	\end{aligned}
	$
	
	\item $
	\!
	\begin{aligned}[t]
	\mathbf{E}\left(Y_{2j}(n)-\frac{\theta}{2}\right)
	&=p_{11}\mathbf{E}\left(f_n(2,\sigma^1(n))-\frac{\theta}{2}\right)+p_{12}\mathbf{E}\left(f_n(2, \sigma^2(n))-\frac{\theta}{2}\right)+p_{13}\mathbf{E}\left(f_n(2, \sigma^3(n))-\frac{\theta}{2}\right)
+p_{14}\mathbf{E}\left(f_n(2, \sigma^4(n))-\frac{\theta}{2}\right)
	\\
	&=\left(\lambda+\frac{\theta(1-\lambda)}{2}\right)y_{n;\theta}+\frac{\theta(1-\lambda)}{2}x_{n;\theta}+(1-\theta)(1-\lambda)z_{n;\theta}
	\\
	&=\lambda y_{n;\theta}.
	\end{aligned}
	$
	
	\item
	It follows immediately from the identity $\sum_{i=1}^{4}Y_{ij}(n)=1$ that, for $i=3,4$
	$$
	\mathbf{E}\left(Y_{ij}(n)-\frac{1-\theta}{2}\right)=-\frac{1}{2}\sum_{i=1}^2\mathbf{E}\left(Y_{ij}(n)-\frac{\theta}{2}\right)=\lambda z_{n;1-\theta}.
	$$
	
    \setItemnumber{8}
	\item  
	By equation \eqref{zn}, we have
	\begin{eqnarray*}
		\frac{\theta}{2}z_{n;1-\theta}&=&\mathbf{E}\left(X_1-\frac\theta2\right)\left(X_3-\frac{1-\theta}2\right)
		\\
		&=&\frac\theta2\mathbf{E}\left(f_n(1, \sigma^1(n))-\frac{\theta}{2}\right)\left(f_n(3, \sigma^1(n))-\frac{1-\theta}{2}\right)+\frac\theta2\mathbf{E}\left(f_n(1, \sigma^2(n))-\frac{\theta}{2}\right)\left(f_n(3, \sigma^2(n))-\frac{1-\theta}{2}\right)
		\\
		&&+\frac{1-\theta}2\mathbf{E}\left(f_n(1, \sigma^3(n))-\frac{\theta}{2}\right)\left(f_n(3, \sigma^3(n))-\frac{1-\theta}{2}\right)
		+\frac{1-\theta}2\mathbf{E}\left(f_n(1, \sigma^4(n))-\frac{\theta}{2}\right)\left(f_n(3, \sigma^4(n))-\frac{1-\theta}{2}\right)
	\end{eqnarray*}
	and then
	\begin{eqnarray*}
		&&\mathbf{E}\left(f_n(1, \sigma^3(n))-\frac{\theta}{2}\right)\left(f_n(3, \sigma^3(n))-\frac{1-\theta}{2}\right)
		+\mathbf{E}\left(f_n(1, \sigma^4(n))-\frac{\theta}{2}\right)\left(f_n(3, \sigma^4(n))-\frac{1-\theta}{2}\right)
		\\
		&=&\theta z_{n;1-\theta}+\frac{\theta}{2(1-\theta)}\left[\left(u_{n;\theta}-\frac{\theta}{2}x_{n;\theta}\right)+3\left(v_{n;\theta}-\frac{\theta}{2}x_{n;\theta}\right)\right].
	\end{eqnarray*}
	Therefore, for $i=3,4$, Lemma \ref{xnun} implies 
	\begin{equation*}
	\begin{split}
	&\mathbf{E}\left(Y_{1j}(n)-\frac{\theta}{2}\right)\left(Y_{ij}(n)-\frac{1-\theta}{2}\right)\\
	=&p_{11}\mathbf{E}\left(f_n(1,\sigma^1(n))-\frac{\theta}{2}\right)\left(f_n(3, \sigma^1(n))-\frac{1-\theta}{2}\right)+p_{12}\mathbf{E}\left(f_n(1, \sigma^2(n))-\frac{\theta}{2}\right)\left(f_n(3, \sigma^2(n))-\frac{1-\theta}{2}\right)\\
	&+p_{13}\mathbf{E}\left(f_n(1, \sigma^3(n))-\frac{\theta}{2}\right)\left(f_n(3, \sigma^3(n))-\frac{1-\theta}{2}\right)+p_{14}\mathbf{E}\left(f_n(1, \sigma^4(n))-\frac{\theta}{2}\right)\left(f_n(3, \sigma^4(n))-\frac{1-\theta}{2}\right)\\
	=&\left[\lambda+\frac{\theta (1-\lambda)}{2} \right]\left( \frac{\theta}{2}(x_{n;\theta}+z_{n;1-\theta})-\frac{u_{n;\theta}+v_{n;\theta}}{2} \right)+\frac{\theta (1-\lambda)}{2} \left( \frac{\theta }{2}\left( x_{n;\theta}+z_{n;1-\theta}\right)-v_{n;\theta} \right)\\
	&+\frac{(1-\theta) (1-\lambda)}{2}\left( \theta z_{n;1-\theta}+\frac{\theta}{2(1-\theta)}\left[\left(u_{n;\theta}-\frac{\theta}{2}x_{n;\theta}\right)+3\left(v_{n;\theta}-\frac{\theta}{2}x_{n;\theta}\right)\right] \right)\\
	=&\frac{\theta z_{n;1-\theta}}{2}-\frac\lambda2\left(u_{n;\theta}-\frac{\theta}{2}x_{n;\theta}\right)-\frac\lambda2\left(v_{n;\theta}-\frac{\theta}{2}x_{n;\theta}\right).
	\end{split}
	\end{equation*}
	
	\setItemnumber{10}
	\item 
	Lemma \ref{xnun} gives
	$$
	\mathbf{E}\left(f_n(1, \sigma^3(n))-\frac{\theta}{2}\right)\left(f_n(3, \sigma^3(n))-\frac{1-\theta}{2}\right)=\frac{\theta z_{n;1-\theta}}{2}+\frac{\theta}{1-\theta}\left(w_{n;1-\theta}-\frac{1-\theta}{2}x_{n;1-\theta}\right),
	$$	
	and then we have
	$$
	\begin{aligned}[t]
	&\mathbf{E}\left(Y_{3j}(n)-\frac{1-\theta}{2}\right)\left(Y_{4j}(n)-\frac{1-\theta}{2}\right)
	=\frac{1-\theta}{2}y_{n;1-\theta}+\frac\lambda2\left(u_{n;\theta}-\frac{\theta}{2}x_{n;\theta}\right)+\frac{3\lambda}2\left(v_{n;\theta}-\frac{\theta}{2}x_{n;\theta}\right)-\lambda\left(w_{n;1-\theta}-\frac{1-\theta}{2}x_{n;1-\theta}\right).
	\end{aligned}
	$$
\end{enumerate}
\end{proof}
	
\subsection{An equivalent condition for non-reconstruction}
\label{sec3}\noindent 

If the reconstruction problem is solvable, $\sigma(n)$ contains
significant information of the root variable. This can be expressed
in several equivalent ways (\cite{mossel2001reconstruction, mossel2004survey}). 
\begin{lemma}
	\label{equivalent} The non-reconstruction is equivalent to
	$$
	\lim_{n\to \infty}x_{n;\theta}=\lim_{n\to \infty}x_{n;1-\theta}=0.
	$$
\end{lemma}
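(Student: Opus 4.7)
The plan is a two-step reduction: first relate non-reconstruction to concentration of the posterior around the prior, and then identify the concentration quantities with $x_{n;\theta}$ and $x_{n;1-\theta}$ via Lemma~\ref{lemma1}(a).

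First I would establish the standard equivalence that non-reconstruction (in the sense $d_{TV}(\sigma^{i}(n),\sigma^{j}(n))\to 0$ for all $i,j$) is equivalent to
$$
\lim_{n\to\infty}\mathbf{E}\bigl|f_{n}(i,\sigma(n))-\pi_{i}\bigr|=0\quad\text{for each }i\in\{1,2,3,4\}.
$$
Bayes' rule yields $\sum_{A}|f_{n}(i,A)-\pi_{i}|\,\mathbf{P}(\sigma(n)=A)=2\pi_{i}\,d_{TV}(\sigma^{i}(n),\sigma(n))$, and convexity of the total variation distance gives $d_{TV}(\sigma^{i}(n),\sigma(n))\leq\sum_{j}\pi_{j}\,d_{TV}(\sigma^{i}(n),\sigma^{j}(n))$, which handles one direction. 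The reverse direction follows from the triangle inequality $d_{TV}(\sigma^{i}(n),\sigma^{j}(n))\leq d_{TV}(\sigma^{i}(n),\sigma(n))+d_{TV}(\sigma^{j}(n),\sigma(n))$ together with the same Bayes identity. This equivalence is classical and attributed in the paper to \cite{mossel2001reconstruction,mossel2004survey}.

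Next, since $|f_{n}(i,\cdot)-\pi_{i}|\leq 1$, the $L^{1}$ convergence above is equivalent to $L^{2}$ convergence via the sandwich $\mathbf{E}(X_{i}-\pi_{i})^{2}\leq\mathbf{E}|X_{i}-\pi_{i}|\leq\sqrt{\mathbf{E}(X_{i}-\pi_{i})^{2}}$ (boundedness and Cauchy--Schwarz). By Lemma~\ref{lemma1}(a), $\mathbf{E}(X_{1}-\theta/2)^{2}=(\theta/2)\,x_{n;\theta}$, and the identical computation applied to state $3$ yields $\mathbf{E}(X_{3}-(1-\theta)/2)^{2}=((1-\theta)/2)\,x_{n;1-\theta}$. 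Finally, the symmetry of $\mathbf{P}$ under the swap $1\leftrightarrow 2$ (respectively $3\leftrightarrow 4$) gives $\mathbf{E}(X_{2}-\pi_{2})^{2}=\mathbf{E}(X_{1}-\pi_{1})^{2}$ and $\mathbf{E}(X_{4}-\pi_{4})^{2}=\mathbf{E}(X_{3}-\pi_{3})^{2}$, so the four $L^{2}$ conditions collapse into the two stated in the lemma. No serious obstacle arises; the only care needed is to verify the pairing under symmetry, which is immediate from the block form of $\mathbf{P}$ displayed in Section~\ref{intro}.
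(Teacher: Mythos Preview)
Your argument is correct and is precisely the standard route implicit in the paper's citation of \cite{mossel2001reconstruction,mossel2004survey}; the paper itself does not supply a proof of Lemma~\ref{equivalent} beyond that reference. Your two-step reduction (TV $\Leftrightarrow$ $L^{1}$ of the posterior via Bayes, then $L^{1}\Leftrightarrow L^{2}$ via boundedness, then identification with $x_{n;\theta},x_{n;1-\theta}$ through Lemma~\ref{lemma1}(a) and the $1\leftrightarrow 2$, $3\leftrightarrow 4$ symmetry) is exactly what the cited sources give in the general setting, specialized to this channel.
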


\section{Recursive formulas}
\label{Sec:Distributional_Recursion}
\subsection{Distributional recursion}
\label{Zresults}
In this section, we will explore the asymptotic behavior of $x_{n;\theta}$
as $n$ goes to infinity, which plays a crucial rule in determining the
reconstructibility. However, it is extremely challenging to
obtain an explicit expression for $x_{n;\theta}$. Therefore, we analyze the recursive relation between $x_{n;\theta}$ and
$x_{n+1;\theta}$ on the tree structure instead. Consider $A$ as a
configuration on $L(n+1)$ and let $A_j$ be its restriction to
$\mathbb{T}_{u_j}\bigcap L(n+1)$. Then from the Markov random field
property, we have
\begin{equation}
\label{recursion} f_{n+1}(1,A)=\frac{N_1}{N_1+N_2+N_3+N_4},
\end{equation}
where
\begin{equation*}
\begin{split}
N_1=&\frac\theta2\prod_{j=1}^d\left[\sum_{i=1}^4P_{1i}\mathbf{P}(\sigma_j(n+1)=A_j\mid\sigma_{u_j}=i)\right]
=\frac\theta2\prod_{j=1}^d\left[1+\frac{2\lambda}{\theta}\left(f_n(1,A_j)-\frac\theta2\right)\right]\mathbf{P}(\sigma_j(n+1)=A_j);
\end{split}
\end{equation*}
\begin{equation*}
\begin{split}
N_2=&\frac\theta2\prod_{j=1}^d\left[\sum_{i=1}^4P_{2i}\mathbf{P}(\sigma_j(n+1)=A_j\mid\sigma_{u_j}=i)\right]=\frac\theta2\prod_{j=1}^d\left[1+\frac{2\lambda}{\theta}\left(f_n(2,A_j)-\frac\theta2\right)\right]\mathbf{P}(\sigma_j(n+1)=A_j);
\end{split}
\end{equation*}
\begin{equation*}
\begin{split}
N_3=&\frac{1-\theta}2\prod_{j=1}^d\left[\sum_{i=1}^4P_{3i}\mathbf{P}(\sigma_j(n+1)=A_j\mid\sigma_{u_j}=i)\right]=\frac{1-\theta}2\prod_{j=1}^d\left[1+\frac{2\lambda}{1-\theta}\left(f_n(3,A_j)-\frac{1-\theta}2\right)\right]\mathbf{P}(\sigma_j(n+1)=A_j);
\end{split}
\end{equation*}
\begin{equation*}
\begin{split}
N_4=&\frac{1-\theta}2\prod_{j=1}^d\left[\sum_{i=1}^4P_{4i}\mathbf{P}(\sigma_j(n+1)=A_j\mid\sigma_{u_j}=i)\right]=\frac{1-\theta}2\prod_{j=1}^d\left[1+\frac{2\lambda}{1-\theta}\left(f_n(4,A_j)-\frac{1-\theta}2\right)\right]\mathbf{P}(\sigma_j(n+1)=A_j).
\end{split}
\end{equation*}
Setting $A=\sigma^1(n+1)$, we have
$$
f_{n+1}(1,\sigma^1(n+1))=\frac{\frac\theta2Z_1}{\frac\theta2Z_1+\frac\theta2Z_2+\frac{1-\theta}2Z_3+\frac{1-\theta}2Z_4},
$$
where
$$
Z_i=\left\{\begin{array}{ll} \prod_{j=1}^d\left[1+\frac{2\lambda}{\theta}\left(Y_{ij}(n)-\frac\theta2\right)\right] \quad & \quad \textup{for}\ i=1, 2,
\\
\
\\
\prod_{j=1}^d\left[1+\frac{2\lambda}{1-\theta}\left(Y_{ij}(n)-\frac{1-\theta}2\right)\right] \quad & \quad \textup{for}\ i=3, 4.
\end{array}
\right.
$$

In the next two lemmas, we provide some important identities
regarding $Z_i(n)$.
\begin{lemma}
	\label{Z1Z2} For any nonnegative $n\in \mathbb{Z}^+$, we have
	$$
	\mathbf{E}\left(Z_1(n)Z_2(n)\right)=\mathbf{E}Z_2^2(n).
	$$
\end{lemma}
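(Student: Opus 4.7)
The plan is to invoke the independence structure of the tree to factorize both sides of the identity into products indexed by the children of the root, and then verify the resulting single-child identity by a short expansion combined with the moment formulas already collected in Lemma \ref{lemma:Yproperties}.

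First, I would exploit the Markov random field property: since $\sigma_\rho=1$ is fixed in the definition of both $Z_1$ and $Z_2$, the subtree configurations $\sigma_j^1(n+1)$ for $j=1,\ldots,d$ are mutually independent (and i.i.d.\ by the symmetry of the $d$-ary tree). Each pair $(Y_{1j},Y_{2j})$ is a deterministic function of $\sigma_j^1(n+1)$ alone, so the pairs are independent across $j$. Setting $F_{ij}:=1+(2\lambda/\theta)(Y_{ij}(n)-\theta/2)$, this independence yields
\begin{equation*}
\mathbf{E}(Z_1(n)Z_2(n))=\prod_{j=1}^{d}\mathbf{E}[F_{1j}F_{2j}],\qquad \mathbf{E}Z_2^2(n)=\prod_{j=1}^{d}\mathbf{E}[F_{2j}^2],
\end{equation*}
so it suffices to prove the single-$j$ identity $\mathbf{E}[F_{1j}F_{2j}]=\mathbf{E}[F_{2j}^2]$ for any fixed $j$.

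Second, a one-line algebraic manipulation gives
\begin{equation*}
F_{1j}F_{2j}-F_{2j}^2=F_{2j}\bigl(F_{1j}-F_{2j}\bigr)=\tfrac{2\lambda}{\theta}(Y_{1j}-Y_{2j})\,F_{2j},
\end{equation*}
so the single-$j$ identity reduces to showing $\mathbf{E}\bigl[(Y_{1j}-Y_{2j})F_{2j}\bigr]=0$. Expanding $F_{2j}$, this is a linear combination of the four moments $\mathbf{E}(Y_{1j}-\theta/2)$, $\mathbf{E}(Y_{2j}-\theta/2)$, $\mathbf{E}[(Y_{1j}-\theta/2)(Y_{2j}-\theta/2)]$, and $\mathbf{E}(Y_{2j}-\theta/2)^2$, all four of which are recorded in Lemma \ref{lemma:Yproperties}, parts (a), (b), (g), and (e) respectively, as explicit linear expressions in $x_{n;\theta}$, $y_{n;\theta}$, and $v_{n;\theta}-(\theta/2)x_{n;\theta}$. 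Substituting these formulas, the $v_{n;\theta}-(\theta/2)x_{n;\theta}$ contributions in (g) and (e) cancel, while the remaining pieces combine to $(2\lambda^2/\theta)(x_{n;\theta}-y_{n;\theta})+(2\lambda^2/\theta)(y_{n;\theta}-x_{n;\theta})=0$, closing the proof.

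There is no genuine obstacle here: the substance of the identity is entirely absorbed by conditional independence across children together with the first and second moments of the $Y_{ij}$'s already computed in Lemma \ref{lemma:Yproperties}. The only point requiring care is recognizing that the superscript $1$ in $\sigma_j^1(n+1)$ is precisely what conditions on the root and makes the subtree configurations independent under the same law against which the expectations in both $\mathbf{E}(Z_1Z_2)$ and $\mathbf{E}Z_2^2$ are taken; without this observation one cannot factorize, and the reduction to a single-$j$ calculation would not go through.
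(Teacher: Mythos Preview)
Your proof is correct, but it takes a different route from the paper. The paper argues globally: writing each $Z_i$ (for $i=1,2$) as $\frac{2}{\theta}\,\frac{\mathbf{P}(\sigma(n+1)=A)}{\prod_j\mathbf{P}(\sigma_j(n+1)=A_j)}\,\mathbf{P}(\sigma_\rho=i\mid\sigma(n+1)=A)$, it expands $\mathbf{E}(Z_1Z_2)$ as a sum over configurations $A$, uses Bayes together with $\pi_1=\pi_2$ to swap $\mathbf{P}(\sigma_\rho=2\mid A)\mathbf{P}(A\mid\sigma_\rho=1)$ for $\mathbf{P}(\sigma_\rho=1\mid A)\mathbf{P}(A\mid\sigma_\rho=2)$, and then invokes the $1\leftrightarrow 2$ symmetry of the model to identify the resulting sum with $\mathbf{E}Z_2^2$. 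You instead factorize across children by conditional independence and reduce to a single-$j$ identity, which you verify algebraically from the moment formulas of Lemma~\ref{lemma:Yproperties}. The paper's argument is shorter and more conceptual---it makes transparent that the identity is nothing but the built-in $1\leftrightarrow 2$ symmetry of the channel---and does not rely on any of the moment computations. Your approach is more computational but perfectly sound, and since Lemma~\ref{lemma:Yproperties} precedes this lemma in the paper there is no circularity.
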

\begin{proofsect}{Proof}
	For any configuration $A=(A_1, \ldots, A_d)$ on the $(n+1)$th level, with
	$A_j$ denoting the spins on $L_{n+1}\cap \mathbb{T}_{u_j}$, we have
	\begin{eqnarray*}
		Z_i=\frac{2}{\theta}\frac{\mathbf{P}(\sigma(n+1)=A)}{\prod_{j=1}^d\mathbf{P}(\sigma_j(n+1)=A_j)}\mathbf{P}(\sigma_\rho=i\mid\sigma(n+1)=A), \quad \text{for}\; i=1, 2.
	\end{eqnarray*}
	By the symmetry of the tree, we have
	\begin{equation*}
	\begin{aligned}
	&\mathbf{E}\left(Z_1(n)Z_2(n)\right)\\
	=&\left(\frac2\theta\right)^2\sum_A\left(\frac{\mathbf{P}(\sigma(n+1)=A)}{\prod_{j=1}^d\mathbf{P}(\sigma_j(n+1)=A_j)}\right)^2\mathbf{P}(\sigma_\rho=1\mid\sigma(n+1)=A)\mathbf{P}(\sigma_\rho=2\mid\sigma(n+1)=A)
\mathbf{P}(\sigma(n+1)=A\mid\sigma_\rho=1)
	\\
	=&\left(\frac2\theta\right)^2\sum_A\left(\frac{\mathbf{P}(\sigma(n+1)=A)}{\prod_{j=1}^d\mathbf{P}(\sigma_j(n+1)=A_j)}\right)^2\mathbf{P}^2(\sigma_\rho=1\mid\sigma(n+1)=A)
	\mathbf{P}(\sigma(n+1)=A\mid\sigma_\rho=2)
	\\
	=&\left(\frac2\theta\right)^2\sum_A\left(\frac{\mathbf{P}(\sigma(n+1)=A)}{\prod_{j=1}^d\mathbf{P}(\sigma_j(n+1)=A_j)}\right)^2\mathbf{P}^2(\sigma_\rho=2\mid\sigma(n+1)=A)
	\mathbf{P}(\sigma(n+1)=A\mid\sigma_\rho=1)
	\\
	=&\mathbf{E}Z_2^2.
	\end{aligned}
	\end{equation*}
\end{proofsect}

The means and
variances of monomials of $Z_i$ can be approximated, using the notation $O_\theta$ to emphasize that the constant associated with the $O$-term depends on $\theta$ only, as follows:
\begin{lemma} One has
\begin{enumerate}[(i)]
	\item $
	\!
	\begin{aligned}[t]
	\mathbf{E}Z_1
	=1+ d\lambda^2\frac2\theta x_{n;\theta}+\frac{d(d-1)}{2}\lambda^4\frac4{\theta^2}x_{n;\theta}^2+O_\theta(x_{n;\theta}^3).
	\end{aligned}
	$
	
	\item $
	\!
	\begin{aligned}[t]
	\mathbf{E}Z_2
	=1+ d\lambda^2\frac2\theta y_{n;\theta}+\frac{d(d-1)}{2}\lambda^4\frac4{\theta^2}y_{n;\theta}^2+O_\theta(x_{n;\theta}^3).
	\end{aligned}
	$
	
	\item $
	\!
	\begin{aligned}[t]
	\mathbf{E}Z_i
	=1+ d\lambda^2\frac2{1-\theta} z_{n;1-\theta}+\frac{d(d-1)}{2}\lambda^4\frac4{(1-\theta)^2}z_{n;1-\theta}^2+O_\theta(x_{n;\theta}^3), \quad i=3,4.
	\end{aligned} $
	
	\item 
	$
	\mathbf{E}Z_1^2=1+d\Pi_1+\frac{d(d-1)}{2}\Pi_1^2+O_\theta(x_{n;\theta}^3),
	$
	where
	$$\Pi_1=\mathbf{E}\left[1+\frac{2\lambda}\theta\left(Y_{1j}-\frac{\theta}{2}\right)\right]^2-1
	=\frac{6\lambda^2}\theta x_{n;\theta}+\frac{4\lambda^3}{\theta^2}\left(u_{n;\theta}-\frac{\theta }{2}x_{n;\theta}\right).$$
	
	\item 
	$
	\mathbf{E}Z_2^2=\mathbf{E}Z_1Z_2=1+d\Pi_2+\frac{d(d-1)}{2}\Pi_2^2+O_\theta(x_{n;\theta}^3),
	$
	where
	$$\Pi_2=\mathbf{E}\left[1+\frac{2\lambda}\theta\left(Y_{2j}-\frac{\theta}{2}\right)\right]^2-1
	=\frac{2\lambda^2}\theta x_{n;\theta}+\frac{4\lambda^2}\theta y_{n;\theta}+\frac{4\lambda^3}{\theta^2}\left(v_{n;\theta}-\frac{\theta }{2}x_{n;\theta}\right).$$
	
	\item 
	$
	\mathbf{E}Z_i^2=1+d\Pi_3+\frac{d(d-1)}{2}\Pi_3^2+O_\theta(x_{n;\theta}^3), 
	$
	for $i=3,4$, where
	$$\Pi_3=\mathbf{E}\left[1+\frac{2\lambda}{1-\theta}\left(Y_{3j}-\frac{1-\theta}{2}\right)\right]^2-1
	=\frac{4\lambda^2}{1-\theta}z_{n;1-\theta}+\frac{2\lambda^2}{1-\theta} x_{n;1-\theta}+\frac{4\lambda^3}{(1-\theta)^2}\left(w_{n;1-\theta}-\frac{1-\theta }{2}x_{n;1-\theta}\right).$$
	
	\item $
	\mathbf{E}Z_{1}Z_{i}
	=1+d\Pi_4+\frac{d(d-1)}{2}\Pi_4^2+O_\theta(x_{n;\theta}^3),
	$
	for $i=3,4$, where
	\begin{eqnarray*}
		\Pi_4&=&\mathbf{E}\left[1+\frac{2\lambda}\theta\left(Y_{1j}-\frac{\theta}{2}\right)\right]\left[1+\frac{2\lambda}{1-\theta}\left(Y_{3j}-\frac{1-\theta}{2}\right)\right]-1
		\\
		&=&\frac{2\lambda^2}{\theta}x_{n;\theta}+\frac{4\lambda^2}{1-\theta}z_{n;1-\theta}-\frac{2\lambda^3}{\theta(1-\theta)}\left[\left(u_{n;\theta}-\frac{\theta}{2}x_{n;\theta}\right)+\left(v_{n;\theta}-\frac{\theta}{2}x_{n;\theta}\right)\right].
	\end{eqnarray*}
	
	\item $
	\mathbf{E}Z_2Z_i
	=1+d\Pi_5+\frac{d(d-1)}{2}\Pi_5^2+O_\theta(x_{n;\theta}^3)$,
	for $i=3,4$, where
	\begin{eqnarray*}
		\Pi_5=\mathbf{E}\left[1+\frac{2\lambda}\theta\left(Y_{2j}-\frac{\theta}{2}\right)\right]\left[1+\frac{2\lambda}{1-\theta}\left(Y_{3j}-\frac{1-\theta}{2}\right)\right]-1=
		\frac{2\lambda^2}{\theta}y_{n;\theta}+\frac{4\lambda^2}{1-\theta}z_{n;1-\theta}-\frac{4\lambda^3}{\theta(1-\theta)}\left(v_{n;\theta}-\frac{\theta}{2}x_{n;\theta}\right).
	\end{eqnarray*}
	
	\item $
	\mathbf{E}Z_3Z_4
	=1+d\Pi_6+\frac{d(d-1)}{2}\Pi_6^2+O_\theta(x_{n;\theta}^3),
	$
	where
	\begin{eqnarray*}
		\Pi_6&=&\mathbf{E}\left[1+\frac{2\lambda}{1-\theta}\left(Y_{3j}-\frac{1-\theta}{2}\right)\right]\left[1+\frac{2\lambda}{1-\theta}\left(Y_{4j}-\frac{1-\theta}{2}\right)\right]-1
		\\
		&=&\frac{4\lambda^2}{1-\theta}z_{n;1-\theta}+\frac{2\lambda^2}{1-\theta}y_{n;1-\theta}+\frac{2\lambda^3}{(1-\theta)^2}\left[\left(u_{n;\theta}-\frac{\theta}{2}x_{n;\theta}\right)+3\left(v_{n;\theta}-\frac{\theta}{2}x_{n;\theta}\right)\right]-\frac{4\lambda^3}{(1-\theta)^2}\left(w_{n;1-\theta}-\frac{1-\theta}{2}x_{n;1-\theta}\right).
	\end{eqnarray*}
\end{enumerate}
\end{lemma}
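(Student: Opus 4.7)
My plan is to exploit the independence of the random vectors $(Y_{1j},Y_{2j},Y_{3j},Y_{4j})$ across $j=1,\dots,d$, which holds because the subtrees $\mathbb{T}_{u_1},\dots,\mathbb{T}_{u_d}$ are disjoint conditional on the children spins and the children spins are themselves conditionally independent given $\sigma_\rho=1$. Since every $Z_i$ is a product over $j$ of terms depending only on the $j$-th vector, each target expectation $\mathbf{E}Z_i$ or $\mathbf{E}Z_iZ_k$ factors as a $d$-fold product of a single-edge expectation. Each of the nine claims thus reduces to two steps: first, compute a single-edge expectation of a polynomial of degree at most two in the centered variables $Y_{ij}-\frac{\theta}{2}$ or $Y_{ij}-\frac{1-\theta}{2}$; second, raise the result to the $d$-th power and apply the binomial expansion $(1+\Pi)^d = 1 + d\Pi + \frac{d(d-1)}{2}\Pi^2 + O_\theta(\Pi^3)$.

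For parts (i)--(iii), the single-edge expectation is simply $1+\alpha_i$ with $\alpha_i$ read off from parts (a), (b), (c) of Lemma \ref{lemma:Yproperties}, and the stated forms follow from the binomial expansion. For the second-moment parts (iv)--(ix), I would expand the square or cross-product inside a single bracket, for instance
\begin{equation*}
\Pi_1 = \frac{4\lambda}{\theta}\mathbf{E}\!\left(Y_{1j}-\frac{\theta}{2}\right) + \frac{4\lambda^2}{\theta^2}\mathbf{E}\!\left(Y_{1j}-\frac{\theta}{2}\right)^{\!2},
\end{equation*}
and substitute the appropriate first and second moments from Lemma \ref{lemma:Yproperties}; parts (d)--(j) of that lemma are exactly the set of joint moments required to evaluate $\Pi_1,\dots,\Pi_6$. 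The identity $\mathbf{E}Z_2^2=\mathbf{E}Z_1Z_2$ asserted in part (v) is supplied by Lemma \ref{Z1Z2}, so no separate calculation is needed for $\mathbf{E}Z_1Z_2$.

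To absorb the cubic and higher-order tail of the binomial expansion into $O_\theta(x_{n;\theta}^3)$, I would invoke Lemma \ref{lemma1}: the decomposition $x_{n;\theta}=u_{n;\theta}+v_{n;\theta}+\frac{2(1-\theta)}{\theta}w_{n;\theta}$ with nonnegative summands forces $u_{n;\theta},v_{n;\theta},w_{n;\theta}=O_\theta(x_{n;\theta})$, the Cauchy--Schwarz bound gives $|y_{n;\theta}|\leq x_{n;\theta}$ and $|z_{n;1-\theta}|\leq x_{n;\theta}$, and the analogous statements with $\theta$ replaced by $1-\theta$ bound $x_{n;1-\theta},y_{n;1-\theta},w_{n;1-\theta}$, and the remaining second-order quantities in the same way. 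Consequently every $\Pi_k$ is $O_\theta(x_{n;\theta})$ uniformly in $n$, so $\binom{d}{3}\Pi_k^3$ is absorbed into the stated error term. I do not foresee any conceptual obstacle; the whole argument is a mechanical combination of independence across subtrees, polynomial expansion, and substitution of moments already computed in Lemma \ref{lemma:Yproperties}. The only real care will be bookkeeping in the mixed-community cases (vii)--(ix), where the two normalizations $\frac{2\lambda}{\theta}$ and $\frac{2\lambda}{1-\theta}$ must be tracked simultaneously and the cross moments from Lemma \ref{lemma:Yproperties}(h),(i),(j) inserted in the correct slots.
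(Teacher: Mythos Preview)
The paper states this lemma without proof, treating it as a routine consequence of the independence of the subtrees $\mathbb{T}_{u_1},\dots,\mathbb{T}_{u_d}$ together with the single-edge moments computed in Lemma~\ref{lemma:Yproperties} and the identity of Lemma~\ref{Z1Z2}. Your outline---factor each $\mathbf{E}Z_i$ or $\mathbf{E}Z_iZ_k$ as a $d$-th power by independence, evaluate the single-bracket expectation via parts (a)--(j) of Lemma~\ref{lemma:Yproperties}, then expand $(1+\Pi)^d$ binomially---is exactly the intended argument, and your bookkeeping plan for the mixed-community parts (vii)--(ix) is correct.

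Two small remarks on the error control. First, to make the tail $\sum_{k\geq 3}\binom{d}{k}\Pi^k$ an $O_\theta(x_{n;\theta}^3)$ with constant independent of $d$, you must invoke the standing hypothesis $d\lambda^2\leq 1$: each $\Pi$ carries a factor $\lambda^2$, so $\binom{d}{k}\Pi^k=O_\theta\bigl((d\lambda^2)^k x_{n;\theta}^k\bigr)=O_\theta(x_{n;\theta}^k)$, and the series sums. Second, in parts (vi) and (ix) the quantities $\Pi_3,\Pi_6$ contain $x_{n;1-\theta}$ and $w_{n;1-\theta}$, which Lemma~\ref{lemma1} bounds only by $O_\theta(x_{n;1-\theta})$, not by $O_\theta(x_{n;\theta})$; the paper's uniform use of $O_\theta(x_{n;\theta}^3)$ is a slight abuse here, and indeed the downstream error terms such as $V_z$ in \eqref{V_z} track $x_{n;\theta}$ and $x_{n;1-\theta}$ contributions separately. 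This is a notational loose end in the paper rather than a gap in your plan.
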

	
\subsection{Main expansions of $x_{n+1;\theta}$ and $z_{n+1;1-\theta}$}
In this section, we investigate the second order
recursive relations associated with $x_{n+1;\theta}$ and $z_{n+1;1-\theta}$, with the assistance of the
following identity
\begin{equation}
\label{identity}
\frac{a}{s+r}=\frac{a}{s}-\frac{ar}{s^2}+\frac{r^2}{s^2}\frac{a}{s+r}.
\end{equation}
Plugging $a=\frac{1-\theta}{2}Z_3$ in equation \eqref{identity}, we have
\begin{equation}
\begin{split}
\label{zexpansion}
z_{n+1;1-\theta}+\frac{1-\theta}{2}=&\mathbf{E}\left[\frac{\frac{1-\theta}{2}Z_3}{\frac\theta2Z_1+\frac\theta2Z_2+\frac{1-\theta}2Z_3+\frac{1-\theta}2Z_4}\right]
\\
=&\mathbf{E}\left[\frac{1-\theta}{2}Z_3\right]-\mathbf{E}\left[\frac{1-\theta}{2}Z_3\left(\frac\theta2Z_1+\frac\theta2Z_2+\frac{1-\theta}2Z_3+\frac{1-\theta}2Z_4-1\right)\right]
\\
&+\mathbf{E}\left[\left(\frac\theta2Z_1+\frac\theta2Z_2+\frac{1-\theta}2Z_3+\frac{1-\theta}2Z_4-1\right)^2\frac{\frac{1-\theta}{2}Z_3}{\frac\theta2Z_1+\frac\theta2Z_2+\frac{1-\theta}2Z_3+\frac{1-\theta}2Z_4}\right].
\end{split}
\end{equation}
Next by the results in Section \ref{Zresults} and taking $\mathcal{Z}_{n;\theta}=-z_{n;1-\theta}$, we have
\begin{eqnarray}
\label{eq:Z_dynamics}
\mathcal{Z}_{n+1;\theta}&=& d\lambda^2\mathcal{Z}_{n;\theta}+\frac{d(d-1)}{2}\lambda^4\vast[\frac{2(1-\theta)^2}{\theta} x_{n;\theta}^2-\frac{4(1-\theta)^2}{\theta}x_{n;\theta}\mathcal{Z}_{n;\theta}+\left(\frac4\theta-\frac{4}{1-\theta}-8\right)\mathcal{Z}_{n;\theta}^2\nonumber
\\
&&\quad\quad\quad\quad\quad\quad\quad\quad\quad\quad\quad+\theta\left(x_{n;1-\theta}^2+y_{n;1-\theta}^2\right)\vast]+R_z+V_z,
\end{eqnarray}
where
\begin{equation}
\label{R_z}
R_z=\mathbf{E}\left(\frac\theta2Z_1+\frac\theta2Z_2+\frac{1-\theta}2Z_3+\frac{1-\theta}2Z_4-1\right)^2\left(\frac{\frac{1-\theta}{2}Z_3}{\frac\theta2Z_1+\frac\theta2Z_2+\frac{1-\theta}2Z_3+\frac{1-\theta}2Z_4}-\frac{1-\theta}2\right)
\end{equation}
and
\begin{equation}
\label{V_z}
|V_z|\leq
C_Vx_{n;\theta}^2\left(\left|\frac{u_{n;\theta}}{x_{n;\theta}}-\frac{\theta}{2}\right|+\left|\frac{v_{n;\theta}}{x_{n;\theta}}-\frac{\theta}{2}\right|+x_{n;\theta}\right)+C_Vx_{n;1-\theta}^2\left(\left|\frac{w_{n;1-\theta}}{x_{n;1-\theta}}-\frac{1-\theta}{2}\right|+x_{n;1-\theta}\right)
\end{equation}
with $C_V$ a constant depending on $\theta$ only.

Similarly, there is the recursive relation for $x_{n+1; \theta}$:
\begin{equation}
\label{xexpansion}
\begin{split}
x_{n+1;\theta}=&\mathbf{E}\frac{\theta}{2}Z_1-\mathbf{E}\frac{\theta}{2}Z_1\left(\frac\theta2Z_1+\frac\theta2Z_2+\frac{1-\theta}2Z_3+\frac{1-\theta}2Z_4-1\right)
\\
&+\mathbf{E}\left(\frac\theta2Z_1+\frac\theta2Z_2+\frac{1-\theta}2Z_3+\frac{1-\theta}2Z_4-1\right)^2\frac{\frac{\theta}{2}Z_1}{\frac\theta2Z_1+\frac\theta2Z_2+\frac{1-\theta}2Z_3+\frac{1-\theta}2Z_4}\\
=&d\lambda^2x_{n;\theta}+\frac{d(d-1)}{2}\lambda^4\vast[\left(-6+\frac{2(1-\theta)}{\theta}
+2\theta\right)x_{n;\theta}^2+\left(-\frac{4\theta}{1-\theta}-16\right)z_{n;1-\theta}^2\\
&\quad\quad\quad\quad\quad\quad\quad\quad\quad\quad+(4\theta-16)x_{n;\theta}z_{n;1-\theta}+\theta\left(x_{n;1-\theta}^2+y_{n;1-\theta}^2\right)\vast]+R_x+V_x,
\end{split}
\end{equation}
where
\begin{equation*}
R_x=\mathbf{E}\left(\frac\theta2Z_1+\frac\theta2Z_2+\frac{1-\theta}2Z_3+\frac{1-\theta}2Z_4-1\right)^2\left(\frac{\frac{\theta}{2}Z_1}{\frac\theta2Z_1+\frac\theta2Z_2+\frac{1-\theta}2Z_3+\frac{1-\theta}2Z_4}-\frac{\theta}2\right)
\end{equation*}
and
$$
|V_x|\leq
C_Vx_{n;\theta}^2\left(\left|\frac{u_{n;\theta}}{x_{n;\theta}}-\frac{\theta}{2}\right|+\left|\frac{v_{n;\theta}}{x_{n;\theta}}-\frac{\theta}{2}\right|+x_{n;\theta}\right)+C_Vx_{n;1-\theta}^2\left(\left|\frac{w_{n;1-\theta}}{x_{n;1-\theta}}-\frac{1-\theta}{2}\right|+x_{n;1-\theta}\right).
$$

\section{Concentration analysis}
\label{Sec:Concentration_Analysis}
In order to study the stability of the dynamical system in equation \eqref{eq:Z_dynamics}, we expect $R_z$ and $V_z$ to be just small perturbations in the order of $o\left(x^2_{n;\theta}+x_{n;1-\theta}^2\right)$. The following lemma ensures that $x_n$ does not drop too fast.
\begin{lemma}
	\label{ndtf} For any $\varrho>0$, there exists a constant
	$\gamma=\gamma(\theta, \varrho)>0$, such that for all $n$ when
	$|\lambda|>\varrho$
	$$
	x_{n+1;\theta}\geq \gamma x_{n;\theta}.
	$$
\end{lemma}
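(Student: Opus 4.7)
The plan is to deduce the bound directly from the law of total variance, with only a single child subtree of the root playing an active role. By Lemma~\ref{lemma1}(a), the assertion $x_{n+1;\theta}\ge\gamma\,x_{n;\theta}$ is equivalent to the posterior-variance inequality $\mathrm{Var}(X_1(n+1))\ge\gamma\,\mathrm{Var}(X_1(n))$, and it is this reformulation that I would target.

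First I would introduce the single-child partial posterior
\begin{equation*}
\tilde{X}_1 := \mathbf{P}(\sigma_\rho=1\mid\sigma_1(n+1))=\mathbf{E}[X_1(n+1)\mid\sigma_1(n+1)],
\end{equation*}
where the second identity follows from the tower property since the $\sigma$-algebra generated by $\sigma_1(n+1)$ is contained in that of $\sigma(n+1)$. Writing $Y_k:=\mathbf{P}(\sigma_{u_1}=k\mid\sigma_1(n+1))$ and applying Bayes' theorem gives $\tilde{X}_1=\pi_1\sum_{k=1}^{4}(P_{1k}/\pi_k)\,Y_k$. The crucial algebraic feature of the F$81$-type matrix $\mathbf{P}$ is that $P_{1k}/\pi_k=1-\lambda$ for every $k\in\{2,3,4\}$ while $P_{11}/\pi_1=1-\lambda+2\lambda/\theta$; together with $\sum_k Y_k=1$ this collapses the four-term sum to the single-variable identity
\begin{equation*}
\tilde{X}_1-\tfrac{\theta}{2}=\lambda\bigl(Y_1-\tfrac{\theta}{2}\bigr).
\end{equation*}

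To close the argument, I would observe that the subtree rooted at $u_1$ is distributed as the original $d$-ary tree after $n$ generations, so $Y_1$ has the same law as $X_1(n)$, and Lemma~\ref{lemma1}(a) yields $\mathrm{Var}(Y_1)=(\theta/2)\,x_{n;\theta}$. The law of total variance then gives
\begin{equation*}
\mathrm{Var}(X_1(n+1))\ge\mathrm{Var}(\tilde{X}_1)=\lambda^{2}\,\mathrm{Var}(Y_1)=\tfrac{\lambda^{2}\theta}{2}\,x_{n;\theta},
\end{equation*}
so multiplying by $2/\theta$ produces $x_{n+1;\theta}\ge\lambda^{2}x_{n;\theta}\ge\varrho^{2}x_{n;\theta}$, and one may take $\gamma(\theta,\varrho):=\varrho^{2}$. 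The only step requiring genuine care is the algebraic collapse of $\pi_1\sum_k(P_{1k}/\pi_k)Y_k$ to a linear function of $Y_1$ alone; this uniformity of the ratios $P_{1k}/\pi_k$ across $k\ne 1$ is what lets a single child already certify the clean rate $\lambda^{2}$, bypassing the full distributional recursion~\eqref{xexpansion} together with the error terms $R_x$ and $V_x$.
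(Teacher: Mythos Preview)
Your proof is correct and considerably cleaner than the paper's. Both arguments begin by isolating the single-child partial posterior $\tilde X_1=\mathbf{P}(\sigma_\rho=1\mid\sigma_1(n+1))$ and deriving the linear identity $\tilde X_1-\tfrac{\theta}{2}=\lambda\bigl(Y_1-\tfrac{\theta}{2}\bigr)$; this is exactly the quantity the paper calls $f_{n+1}^{*}(1,A)$. From there the two routes diverge. The paper converts $\mathbf{E}f_{n+1}^{*}(1,\sigma_1^1(n+1))=\tfrac{\theta}{2}+\lambda^2 x_{n;\theta}$ into an MLE/Cauchy--Schwarz bound $\lambda^2 x_{n;\theta}\le\sqrt{2\theta x_{n+1;\theta}}$, which only yields the quadratic estimate $x_{n+1;\theta}\ge\tfrac{\lambda^4}{2}x_{n;\theta}^2$; it must then invoke the full second-order expansion~\eqref{xexpansion} and error bound~\eqref{eq:xn_ineq} to get a linear lower bound in the small-$x$ regime, and finally splice the two regimes together to produce $\gamma=\min\{\varrho^2,\varrho^6/(2C_\theta)\}$. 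Your use of the tower property $\tilde X_1=\mathbf{E}[X_1(n+1)\mid\sigma_1(n+1)]$ followed by the law of total variance short-circuits all of this, giving the sharp linear bound $x_{n+1;\theta}\ge\lambda^2 x_{n;\theta}$ in one stroke and a constant $\gamma=\varrho^2$ that is in fact independent of $\theta$. The paper's approach does have the incidental byproduct~\eqref{nftf}, which is reused later in Section~\ref{Sec:Proof_of_Main_Theorem} via the consequence $x_{n;\theta}\ge x_0\gamma^n$; but that consequence follows equally well (indeed with a better $\gamma$) from your bound.
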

\begin{proof}
	
	Define
	$\Delta_n=\mathbf{E}\max\{X_I(n), X_{II}(n)\}$, where $I=\{1, 2\}$ and $II=\{3, 4\}$. 	Then the following inequality holds 
	$$2\left(x_{n;\theta}+\frac\theta2\right)\leq\mathbf{E}f_n(I,\sigma^I(n))\leq
	\Delta_n,$$ 
	where $f_n(I,\sigma^I(n))$ satisfies
	$$
	f_n(1,\sigma^1(n))+f_n(2,\sigma^2(n))\leq f_n(I,\sigma^I(n)).
	$$ 
	Furthermore, Lemma \ref{lemma1}
	indicates that $x_{n;\theta}=\frac{2}{\theta}\mathbf{E}\left(X_1(n)-\frac\theta2\right)^2$. In the sequel, we consider $\theta\geq\frac12$, and the case of $\theta< \frac12$ can be handled similarly. By the Cauchy-Schwarz inequality and using the fact that $$(X_I-\theta)+(X_{II}-(1-\theta))=0,$$ we have
	\begin{eqnarray}
	\label{MLE} \Delta_n&\leq&\theta+\mathbf{E}\max\left\{X_I(n)-\theta,
	X_{II}(n)-(1-\theta)\right\}\nonumber
	\\
	&=&\theta+\mathbf{E}|X_I(n)-\theta|\nonumber
	\\
	&=&\theta+\mathbf{E}|X_1(n)+X_{2}(n)-\theta|\nonumber
	\\
	&\leq&\theta+2\mathbf{E}\left|X_1(n)-\frac\theta2\right|\nonumber
	\\
	&\leq&\theta+2\left(\mathbf{E}\left(X_1(n)-\frac{\theta}{2}\right)^2\right)^{1/2}\nonumber
	\\
	&=&\theta+\sqrt{2\theta x_{n;\theta}}.
	\end{eqnarray}
	
	For a configuration $A=(A_1,\ldots,A_d)$ on
	$L(n+1)$ with $A_j$ on $\mathbb{T}_{u_j}\bigcap L(n+1)$, define
	\begin{eqnarray*}
		f_{n+1}^*(1, A)&=&\mathbf{P}(\sigma_\rho=1\mid\sigma_1(n+1)=A_1)
		\\
		&=&\frac\theta2\frac{\mathbf{P}(\sigma_1(n+1)=A\mid\sigma_\rho=1)}{\mathbf{P}(\sigma_1(n+1)=A)}
		\\
		&=&\frac\theta2\frac{\sum_{i=1}^4P_{1i}\mathbf{P}(\sigma_1(n+1)=A\mid\sigma_{u_1}=i)}{\mathbf{P}(\sigma_1(n+1)=A)}
		\\
		&=&\frac\theta2\left[\frac{2P_{11}}{\theta}f_n(1,
		A)+\frac{2P_{12}}{\theta}f_n(2, A)+\frac{2P_{13}}{1-\theta}f_n(3, A)+\frac{2P_{14}}{1-\theta}f_n(4, A)\right]
		\\
		&=&\frac\theta2\left[1+\frac{2\lambda}{\theta}\left(f_n(1, A)-\frac\theta2\right)\right],
	\end{eqnarray*}
	and hence
	$$
	\mathbf{E}f_{n+1}^*(1, \sigma_1^1(n+1))=\frac\theta2+\lambda^2x_{n;\theta}.
	$$
	Therefore, it follows from equation \eqref{MLE} that
	\begin{eqnarray*}
		\frac\theta2+\lambda^2x_{n;\theta}\leq\Delta_{n+1}
		\leq\frac\theta2+\sqrt{2\theta x_{n+1;\theta}}\leq\frac\theta2+\sqrt{2x_{n+1;\theta}},
	\end{eqnarray*}
	namely,
	\begin{equation}
	\label{nftf} x_{n+1;\theta}\geq \frac{\lambda^4}2x_{n;\theta}^2\geq
	\frac{\varrho^4}2x_{n;\theta}^2.
	\end{equation}

	Noting that $Z_i\geq0$ and then $0\leq\frac{\frac{\theta}{2}Z_1}{\frac\theta2Z_1+\frac\theta2Z_2+\frac{1-\theta}2Z_3+\frac{1-\theta}2Z_4}\leq 1$, it is concluded
	from equation \eqref{xexpansion} that
	\begin{equation}
	\begin{split}
	\label{eq:xn_ineq}
	x_{n+1;\theta}-d\lambda^2x_{n;\theta}\geq &\mathbf{E}\frac{\theta}{2}Z_1-\mathbf{E}\frac{\theta}{2}Z_1\left(\frac\theta2Z_1+\frac\theta2Z_2+\frac{1-\theta}2Z_3+\frac{1-\theta}2Z_4-1\right)-d\lambda^2x_{n;\theta}\\
	\geq &-C_{\theta}x_{n;\theta}^2.
	\end{split}
	\end{equation}
	Thus there exists a $\delta=\delta(\theta,
	\varrho)=\varrho^2/C_\theta>0$ such that if $x_{n;\theta}<\delta$ then
	$$
	x_{n+1;\theta}\geq (d\lambda^2-\varrho^2)x_{n;\theta}\geq
	(d-1)\varrho^2x_{n;\theta}\geq\varrho^2x_{n;\theta}.
	$$
	Also noting that if $x_{n;\theta}\geq \delta$, equation \eqref{nftf} becomes
	$x_{n+1;\theta}\geq\frac{\varrho^4}2\delta x_{n;\theta}$. 
	
	Finally taking
	$\gamma=\min\{\varrho^2, \varrho^4\delta/2\}$ completes the proof.
\end{proof}

It is known that fixed finite
different vertices far away from the root carry little information of the root, based on which, in-depth
concentration results could be established. 

\begin{lemma}
	\label{effectlittle} For any $\varepsilon>0$ and a positive integer
	$k$, there exists $M=M(\theta, \varepsilon, k)$ such that for any
	collection of vertices $v_1,\ldots, v_k\in L(M)$, when $i=1,2$
	$$
	\sup_{i_1,\ldots,
		i_k\in
		\mathcal{C}}\left|\mathbf{P}(\sigma_\rho=i\mid\sigma_{v_j}=i_j, 1\leq j\leq
	k)-\frac\theta2\right|\leq \varepsilon,
	$$
	while for $i=3,4$
	$$
	\sup_{i_1,\ldots,
		i_k\in
		\mathcal{C}}\left|\mathbf{P}(\sigma_\rho=i\mid\sigma_{v_j}=i_j, 1\leq j\leq
	k)-\frac{1-\theta}2\right|\leq \varepsilon.
	$$
\end{lemma}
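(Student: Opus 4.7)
The plan is to combine Bayes' rule with the exponential ergodicity of the Markov chain $\mathbf{P}$. Since $\lambda\in(0,1)$ and $\theta\in(0,1)$, every entry of $\mathbf{P}$ is strictly positive, so $\mathbf{P}$ is irreducible and aperiodic with stationary distribution $\pi$. Set $F_r := \mathbf{P}(\sigma_{v_1}=i_1,\ldots,\sigma_{v_k}=i_k \mid \sigma_\rho=r)$ and $\bar F := \sum_r \pi_r F_r$. Bayes' rule gives
\[
\mathbf{P}(\sigma_\rho = i \mid \sigma_{v_j}=i_j,\ 1\le j\le k) - \pi_i
= \pi_i\,\frac{F_i - \bar F}{\bar F},
\]
so it suffices to establish (a) a uniform lower bound $\bar F \geq C^{*}(\theta,k,d)>0$ for $M$ large, and (b) $\max_i |F_i - \bar F| \to 0$ uniformly in $i_1,\ldots,i_k$ and $v_1,\ldots,v_k \in L(M)$ as $M\to\infty$.

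Part (a) is secured by factorizing $\bar F$ along the Steiner tree connecting $\rho$ to $\{v_1,\ldots,v_k\}$ via the tree Markov property. This Steiner tree has at most $O(k)$ branching vertices (independently of $M$), and each transition factor $[\mathbf{P}^\ell]_{c,c'}$ is bounded below by a positive constant depending only on $\theta$: by $c_0:=\min_{r,r'}P_{rr'}>0$ for short edges, and by $\tfrac{1}{2}\min_r\pi_r$ for long edges, using the exponential mixing $\|[\mathbf{P}^\ell]_{r,\cdot}-\pi\|_{TV}=O(|\lambda|^\ell)$. Collecting these factors yields the desired uniform positive lower bound.

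For part (b), I would set up a Doeblin contraction along the tree. Let $B_\ell$ denote the set of distinct ancestors at level $\ell$ of $\{v_1,\ldots,v_k\}$, so that $B_0=\{\rho\}$, $B_M=\{v_1,\ldots,v_k\}$, and $1\le |B_\ell|\le k$ for every $\ell$. By the Markov random field property, $(\sigma_{B_\ell})_{\ell=0}^M$ is an inhomogeneous Markov chain whose transition kernel $K_\ell:\mathcal{C}^{|B_\ell|}\to\mathcal{C}^{|B_{\ell+1}|}$ is an explicit product of entries of $\mathbf{P}$ along the Steiner-tree edges. Since each factor is $\ge c_0$, one has $K_\ell(x,y)\ge c_0^{|B_{\ell+1}|}\ge c_0^k$ for all $x,y$, yielding the Doeblin contraction
\[
d_{TV}(K_\ell\mu,K_\ell\nu)\le (1-\eta)\,d_{TV}(\mu,\nu),\qquad \eta:=(4c_0)^k\in(0,1),
\]
uniformly in $\ell$ and in the Steiner-tree structure. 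Iterating across the $M$ levels gives $d_{TV}\bigl(\mathbf{P}(\sigma_{B_M}\mid\sigma_\rho=i),\mathbf{P}(\sigma_{B_M}\mid\sigma_\rho=j)\bigr)\le (1-\eta)^M$, and hence $|F_i-\bar F|\le 2(1-\eta)^M$. Choosing $M=M(\theta,\varepsilon,k)$ so that $2(1-\eta)^M < \varepsilon\, C^{*}$ then finishes the proof. The main obstacle is securing a contraction rate $\eta$ that is genuinely uniform over the Steiner-tree structure of the chosen $v_1,\ldots,v_k$; this is resolved by the bound $|B_\ell|\le k$ combined with strict positivity of every entry of $\mathbf{P}$, so $\eta$ depends only on $\theta$ and $k$, after which (a) and (b) combine straightforwardly.
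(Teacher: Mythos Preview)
Your approach is correct and genuinely different from the paper's. The paper first computes the $s$-step transition probabilities explicitly (e.g.\ $P^s_{1,1}=\tfrac{\theta}{2}+(1-\tfrac{\theta}{2})\lambda^s$), then uses a pigeonhole trick: with $M>kN$, the ancestor counts $n_\ell=|B_\ell|$ increase from $1$ to $k$, so there must exist $\ell$ with $n_\ell=n_{\ell+N}$, i.e.\ $N$ consecutive levels with no branching. Along that stretch each ancestor is linked to its descendant by a single path of length $N$, and the explicit bound $|P^N_{h,i}-\pi_i|\le d^{-N/2}$ makes all likelihood ratios $\prod_j P^N_{h_j,i_j}/\prod_j P^N_{h_j',i_j}$ lie in $[B(N)^{-k},B(N)^k]$, whence the posterior ratios are close to the $\pi$-ratios. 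Your route bypasses the pigeonhole step entirely: you contract in total variation at \emph{every} level via a uniform Doeblin minorization of the ancestor chain $(\sigma_{B_\ell})_\ell$. This is more robust (it works verbatim for any tree-indexed chain with a strictly positive transition matrix), while the paper's argument exploits the explicit spectral structure of this particular $\mathbf{P}$ and needs only mixing along single paths rather than on the product space $\mathcal{C}^{|B_\ell|}$.

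Two small points to tighten. First, you assert that $c_0=\min_{r,r'}P_{rr'}$ and hence $\eta=(4c_0)^k$ depend only on $\theta$, but the off-diagonal entries equal $\pi_{r'}(1-\lambda)$, so $c_0$ depends on $\lambda$; you must invoke the paper's standing hypothesis $d\lambda^2\le 1$ (hence $|\lambda|\le 1/\sqrt{2}$) to get $c_0\ge \tfrac{1}{2}\min(\theta,1-\theta)(1-1/\sqrt{2})$, which then depends only on $\theta$. Without this, your $M$ would depend on $\lambda$, contrary to the stated $M=M(\theta,\varepsilon,k)$. Second, your Part~(a) is overcomplicated and the notation $C^*(\theta,k,d)$ is misleading: conditioning on $\sigma_{B_{M-1}}$ and using the one-step minorization already gives $\bar F\ge c_0^{\,k}$, so $C^*$ depends only on $(\theta,k)$ and the Steiner-tree factorization is unnecessary. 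With these two fixes your argument is complete.
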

\begin{proof}
	Denote the transition probability from state $i$ to state $j$ at distance $s$ by $P_{i, j}^s$. 
	Applying Bayes' theorem, we have
	\begin{eqnarray*}
		P_{1, 1}^{s+1}&=&P_{11}P_{1, 1}^s+P_{12}P_{2, 1}^s+P_{13}P_{3, 1}^s+P_{14}P_{4, 1}^s
		\\
		&=&\left(\lambda+\frac{\theta(1-\lambda)}{2}\right)\left(\frac\theta2+\left(1-\frac\theta2\right)\lambda^s\right)
		+\frac{\theta(1-\lambda)}{2}\left(\frac\theta2-\frac\theta2\lambda^s\right)
		+\frac{(1-\theta)(1-\lambda)}{2}\left(\frac\theta2-\frac\theta2\lambda^s\right)
+\frac{(1-\theta)(1-\lambda)}{2}\left(\frac\theta2-\frac\theta2\lambda^s\right)
		\\
		&=&\frac{\theta}{2}+\left(1-\frac{\theta}{2}\right)\lambda^{s+1}.
	\end{eqnarray*}
	Similarly, we can achieve the following results by induction: 
	\begin{equation*}
	\begin{aligned}[c]
	P_{1, 1}^s&=P_{2, 2}^s=\frac\theta2+\left(1-\frac\theta2\right)\lambda^s,\\
	P_{3, 3}^s&=P_{4, 4}^s=\frac{1-\theta}2+\left(1-\frac{1-\theta}2\right)\lambda^s,\\
	P_{3, 1}^s&=P_{4, 1}^s=P_{3, 2}^s=P_{4, 2}^s=\frac{\theta}2-\frac{\theta}2\lambda^s,
	\end{aligned}
	\qquad \qquad
	\begin{aligned}[c]
	P_{1, 2}^s&=P_{2, 1}^s=\frac\theta2-\frac\theta2\lambda^s,\\
	P_{3, 4}^s&=P_{4, 3}^s=\frac{1-\theta}2-\frac{1-\theta}2\lambda^s,\\
	P_{1, 3}^s&=P_{1, 4}^s=P_{2, 3}^s=P_{2, 4}^s=\frac{1-\theta}2-\frac{1-\theta}2\lambda^s.
	\end{aligned}
	\end{equation*}	
	Consequently, under the condition that $d\lambda^2\leq 1$, when $i=1, 2$ one has
	$$ 
	\frac\theta2-d^{-s/2}\leq P_{\ell, i}^s\leq \frac\theta2+d^{-s/2},
	$$
	and when $j=3, 4$ one has
	$$
	\frac{1-\theta}2-d^{-s/2}\leq P_{\ell, j}^s\leq \frac{1-\theta}2+d^{-s/2}.
	$$
	For fixed $\theta$, $d$ and $k$, define
	$$
	B(s)=\max\left\{\frac{\frac\theta2+d^{-s/2}}{\frac\theta2-d^{-s/2}},
	\frac{\frac{1-\theta}2+d^{-s/2}}{\frac{1-\theta}2-d^{-s/2}}\right\},
	$$
	and let $N=N(\theta, k, \varepsilon)$ be a sufficiently large integer
	such that
	$$
	B^k(N)\leq 1+\varepsilon,
	$$
	which holds for the reason that $d^{-\frac s2}\leq2^{-\frac s2}\to0$ as $s\to\infty$ and then
	$B(s)\to1$ uniformly for all $d$.

	Fix an integer $M$ such that $M>kN$ and choose any
	$v_1,\ldots,v_k\in L(M)$. For $0\leq \ell\leq M$, define $n_\ell$ as the number of vertices in distance $\ell$ from the root with a
	decedent in the set $\{v_1,\ldots,v_k\}$, that is,
	$$
	n_\ell=\{v\in L(\ell): \left|\mathbb{T}_v\cap
	\{v_1,\ldots,v_k\}\right|>0\}.
	$$
	Apparently, $n_0=1$ and $n_M=k$. Also, we can easily see that $n_\ell$ is an increasing integer valued function and there must exist some $\ell$ such that
	$n_\ell=n_{\ell+N}$. Let
	$\{\overline{w}_1,\ldots,\overline{w}_{n_\ell}\}$ be the
	vertices in the set $\{v\in L(\ell): |\mathbb{T}_v\cap
	\{v_1,\ldots,v_k\}|>0\}$, and $\{w_1,\ldots,w_{n_\ell}\}$ be the
	vertices in the set $\{v\in L(\ell+N): |\mathbb{T}_v\cap
	\{v_1,\ldots,v_k\}|>0\}$ such that $w_j$ is the descendent of
	$\overline{w}_j$. By the Markov random field property,
	$\{\sigma_{w_j}\}_{j=1,\cdots, n_\ell}$ are conditionally independent given
	$\sigma_{\overline{w}_j}$. The distribution of $\sigma_{w_j}$ given
	$\sigma_{\overline{w}_j}$ is given by
	$$
	\mathbf{P}(\sigma_{w_j}=i_2\mid\sigma_{\overline{w}_j}=i_1)=P_{i_1,i_2}^N.
	$$
	By Bayes' theorem and the Markov random field property, for any
	$i_1,\ldots,i_{n_\ell}\in \mathcal{C}$, we have
	\begin{eqnarray*}
		\frac{\mathbf{P}(\sigma_\rho=1\mid\sigma_{w_j}=i_j, 1\leq j\leq
			n_\ell)}{\mathbf{P}(\sigma_\rho=2\mid\sigma_{w_j}=i_j, 1\leq j\leq
			n_\ell)}
		&=&\frac{\mathbf{P}(\sigma_{w_j}=i_j, 1\leq j\leq
			n_\ell\mid\sigma_\rho=1)}{\mathbf{P}(\sigma_{w_j}=i_j, 1\leq j\leq
			n_\ell\mid\sigma_\rho=2)}
		\\
		&=&\frac{\sum_{h_1,\ldots,h_{n_\ell}\in
				\mathcal{C}}\mathbf{P}(\forall j\ \sigma_{w_j}=i_j\mid\forall j\
			\sigma_{\overline{w}_j}=h_j)\mathbf{P}(\forall j\
			\sigma_{\overline{w}_j}=h_j\mid\sigma_\rho=1)}{\sum_{h_1,\ldots,h_{n_\ell}\in
				\mathcal{C}}\mathbf{P}(\forall j\ \sigma_{w_j}=i_j\mid\forall j\
			\sigma_{\overline{w}_j}=h_j)\mathbf{P}(\forall j\
			\sigma_{\overline{w}_j}=h_j\mid\sigma_\rho=2)}
		\\
		&=&\frac{\sum_{h_1,\ldots,h_{n_\ell}\in
				\mathcal{C}}\mathbf{P}(\forall j\
			\sigma_{\overline{w}_j}=h_j\mid\sigma_\rho=1)\prod_{j=1}^{n_\ell}P_{h_j,i_j}^N}{\sum_{h_1,\ldots,h_{n_\ell}\in
				\mathcal{C}}\mathbf{P}(\forall j\
			\sigma_{\overline{w}_j}=h_j\mid\sigma_\rho=2)\prod_{j=1}^{n_\ell}P_{h_j,i_j}^N}
		\\
		&\leq&B^{n_\ell}(N)\frac{\sum_{h_1,\ldots,h_{n_\ell}\in
				\mathcal{C}}\mathbf{P}(\forall j\
			\sigma_{\overline{w}_j}=h_j\mid\sigma_\rho=1)}{\sum_{h_1,\ldots,h_{n_\ell}\in
				\mathcal{C}}\mathbf{P}(\forall j\
			\sigma_{\overline{w}_j}=h_j\mid\sigma_\rho=2)}
		\\
		&\leq&B^{k}(N)
		\\
		&\leq&(1+\varepsilon).
	\end{eqnarray*}
	Similarly discussions yield $$\frac{\mathbf{P}(\sigma_\rho=2\mid\sigma_{w_j}=i_j, 1\leq j\leq
		n_\ell)}{\mathbf{P}(\sigma_\rho=1\mid\sigma_{w_j}=i_j, 1\leq j\leq
		n_\ell)}\leq1+\varepsilon,$$
	$$
	\frac{\mathbf{P}(\sigma_\rho=1\mid\sigma_{w_j}=i_j, 1\leq j\leq
		n_\ell)}{\mathbf{P}(\sigma_\rho=m\mid\sigma_{w_j}=i_j, 1\leq j\leq
		n_\ell)}\leq \frac{\theta}{1-\theta}(1+\varepsilon), \quad \quad m=3, 4,$$
	and
	$$ \frac{\mathbf{P}(\sigma_\rho=m\mid\sigma_{w_j}=i_j, 1\leq j\leq
		n_\ell)}{\mathbf{P}(\sigma_\rho=1\mid\sigma_{w_j}=i_j, 1\leq j\leq
		n_\ell)}\leq \frac{1-\theta}{\theta}(1+\varepsilon), \quad \quad m=3, 4.
	$$
	Therefore, we obtain
	$$
	\frac\theta2-\varepsilon\leq\mathbf{P}(\sigma_\rho=1\mid\sigma_{w_j}=i_j,
	1\leq j\leq n_\ell)\leq\frac\theta2+\varepsilon.
	$$
	Finally, since $\sigma_\rho$ is conditionally independent of the
	collection $\{\sigma_{v_1},\ldots,\sigma_{v_k}\}$ given
	$\{\sigma_{w_1},\ldots,\sigma_{w_{n_\ell}}\}$, it is concluded that
	$$\sup_{i_1,\ldots,\
		i_k}\left|\mathbf{P}(\sigma_\rho=1\mid\sigma_{v_j}=i_j, 1\leq j\leq
	k)-\frac\theta2\right|
	\leq\sup_{i_1,\ldots,\
		i_{n_\ell}}\left|\mathbf{P}(\sigma_\rho=1\mid\sigma_{w_j}=i_j, 1\leq
	j\leq n_\ell)-\frac\theta2\right|
	\leq\varepsilon.
	$$
	
	The rest follows similarly.
\end{proof}

Now, we are able to bound the remainders $R_z$ and $V_z$ in equation \eqref{R_z} and \eqref{V_z}, using the preceding concentration results.
\begin{lemma}
	\label{concentration} 
	Assume $|\lambda|>\varrho$ for some $\varrho>0$.
	For any $\varepsilon>0$, there exist $N=N(\theta, \varepsilon)$ and
	$\delta=\delta(\theta, \varepsilon, \varrho)>0$, such that if $n\geq N$
	and $x_{n;\theta}, x_{n;1-\theta}\leq\delta$ then 
	$$|R_z|\leq\varepsilon \left(x_{n;\theta}^2+x_{n;1-\theta}^2\right).$$
\end{lemma}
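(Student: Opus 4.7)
The plan is to recognize that
\begin{equation*}
R_z = \mathbf{E}\bigl[(Q-1)^2 F\bigr], \qquad Q := \tfrac{\theta}{2}(Z_1+Z_2) + \tfrac{1-\theta}{2}(Z_3+Z_4), \qquad F := f_{n+1}(3,\sigma^1(n+1)) - \tfrac{1-\theta}{2},
\end{equation*}
and to exploit two independent ingredients: a quadratic-in-$x$ bound on $\mathbf{E}[(Q-1)^2]$, together with pointwise smallness of $F$ on a ``typical'' event supplied by Lemma \ref{effectlittle}. Since $F$ is a centered posterior, $|F|\leq \tfrac{1+\theta}{2}\leq 1$, which gives the crude bound $|R_z|\leq \mathbf{E}[(Q-1)^2]$; the goal is to refine this by showing that $|F|$ is in fact of order $\varepsilon$ on most of the probability space.

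For the first ingredient, write $U_{ij} = \tfrac{\lambda}{\pi_i}(Y_{ij}-\pi_i)$ so that $Z_i = \prod_j(1+U_{ij})$ and $\pi_i U_{ij} = \lambda(Y_{ij}-\pi_i)$. The pointwise identity $\sum_i Y_{ij} = \sum_i \pi_i = 1$ then yields the cancellation $\sum_i \pi_i U_{ij} = 0$, so that the linear-in-$d$ contribution to $Q-1=\sum_i\pi_i(Z_i-1)$ vanishes entirely, and the leading surviving term is a sum over distinct pairs $j_1<j_2$ of quadratic-in-fluctuation products. Combining this with the central-moment identities of Lemma \ref{lemma:Yproperties}, independence across the $d$ subtrees, and the standing assumption $d\lambda^2\leq 1$ gives $\mathbf{E}[(Q-1)^2]\leq C_\theta(x_{n;\theta}^2+x_{n;1-\theta}^2)$, with a parallel fourth-moment estimate $\mathbf{E}[(Q-1)^4]\leq C_\theta(x_{n;\theta}^2+x_{n;1-\theta}^2)^2$.

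For the second ingredient, fix $\varepsilon>0$ and a large integer $k$, and apply Lemma \ref{effectlittle} to obtain $N = N(\theta,\varepsilon,k)$ such that for any $k$ vertices at level $N$ the root posterior deviates from the prior by at most $\varepsilon/(2C_\theta)$. For $n+1>N$, decompose $\sigma^1(n+1)$ through the intermediate level $N$ via the Markov random field property, and introduce the ``good'' event $G$ on which all but at most $k$ of the subtrees rooted at $L(N)$ contribute negligibly to the posterior. On $G$, the tower property together with Lemma \ref{effectlittle} yields $|F|\leq \varepsilon/(2C_\theta)$; on $G^c$, use $|F|\leq 1$ and Cauchy--Schwarz. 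Putting everything together,
\begin{equation*}
|R_z| \leq \tfrac{\varepsilon}{2}(x_{n;\theta}^2+x_{n;1-\theta}^2) + \mathbf{E}[(Q-1)^4]^{1/2}\,\mathbf{P}(G^c)^{1/2},
\end{equation*}
and one then chooses $\delta=\delta(\theta,\varepsilon,\varrho)$ so small that $\mathbf{P}(G^c)\leq \varepsilon^2/(4C_\theta)$ whenever $x_{n;\theta},x_{n;1-\theta}\leq \delta$, via a Chebyshev bound on the count of exceptional subtrees using the second-moment formulas in Lemma \ref{lemma:Yproperties}. Lemma \ref{ndtf} enters as an auxiliary tool to ensure $x_{n+1;\theta}\gtrsim x_{n;\theta}$, so that the comparison between consecutive levels remains non-degenerate.

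The principal obstacle is the construction of $G$: one must adaptively select the $k$ informative vertices at level $N$ from $\sigma^1(n+1)$ so that (i) $|F|\leq\varepsilon/(2C_\theta)$ genuinely holds on $G$ through Lemma \ref{effectlittle}, and (ii) $\mathbf{P}(G^c)$ is controllably small. The intended route is to identify ``atypical'' subtrees where $|Y_{ij}-\pi_i|$ is exceptionally large, bound their number by Chebyshev applied to the second-moment identities of Lemma \ref{lemma:Yproperties}, and condition on the spins in these subtrees together with a few representatives from the remainder; once this is in place, the two-piece splitting above closes the argument by routine bookkeeping.
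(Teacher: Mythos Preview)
Your architecture matches the paper's: write $R_z=\mathbf{E}[(Q-1)^2F]$, control $|F|$ on a good event via Lemma~\ref{effectlittle} applied at an intermediate level, and use Cauchy--Schwarz on the complement. The paper also uses precisely the second-moment estimate $\mathbf{E}[(Q-1)^2]\le C_1(x_{n;\theta}^2+x_{n;1-\theta}^2)$ coming from the cancellation $\sum_i\pi_iU_{ij}=0$.

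The gap is in your fourth-moment claim $\mathbf{E}[(Q-1)^4]\le C_\theta(x_{n;\theta}^2+x_{n;1-\theta}^2)^2$. Expanding $(Q-1)^4$ over quadruples of pairs, the diagonal contribution where all four pairs coincide gives $\binom{d}{2}\,\mathbf{E}[B_{jj'}^4]$ with $B_{jj'}=\sum_i\pi_iU_{ij}U_{ij'}$; since $|U_{ij}|\le C_\theta\lambda$ and $\mathbf{E}[U_{ij}^2]=O_\theta(\lambda^2 x_n)$, one only gets $\mathbf{E}[U_{ij}^4]\le C_\theta\lambda^2\,\mathbf{E}[U_{ij}^2]=O_\theta(\lambda^4 x_n)$, hence $\mathbf{E}[B_{jj'}^4]=O_\theta(\lambda^8 x_n^2)$ and a total diagonal contribution $O_\theta(d^2\lambda^8 x_n^2)=O_\theta(\lambda^4 x_n^2)$, which is \emph{not} $O(x_n^4)$. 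With only $\mathbf{E}[(Q-1)^4]^{1/2}=O(x_n)$, a merely constant bound on $\mathbf{P}(G^c)$ does not give $o(x_n^2)$ after Cauchy--Schwarz.

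The paper sidesteps this by using only the crude bound $\mathbf{E}[(Q-1)^4]\le C_2$ and instead forcing $\mathbf{P}(G^c)\le C\,x_n^{6}$: one fixes $k>6$, so the binomial tail for the number of atypical subtrees at level $M$ gives $\mathbf{P}(G^c)\le C'\,x_{n+1-M}^{\,k}$, and Lemma~\ref{ndtf} converts this to $C\,x_n^{6}$. Then $\sqrt{C_2}\cdot\sqrt{Cx_n^6}=O(x_n^3)=o(x_n^2)$ closes the estimate. A minor point: the ``atypical'' objects at the intermediate level are not the $Y_{ij}$ (those sit one step below the root) but the subtree posteriors $W(v)=f_{n+1-M}(\cdot,\sigma_v^1(n+1))$ for $v\in L(M)$; the Chebyshev bound uses $\mathbf{E}[(W(v)-\pi_i)^2]=O(x_{n+1-M;\theta})$ rather than Lemma~\ref{lemma:Yproperties} directly.
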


\begin{proof}
	Fix $k$ an integer such that $k>6$. Choose $M$ such that the conclusions of Lemma \ref{effectlittle} hold with bound $\frac\varepsilon2$. Denote
	$v_1,\ldots,v_{|L(M)|}$ as the vertices in $L(M)$, define $\sigma_v^1(n+1)$ as the spins of vertices in
	$\mathbb{T}_v\bigcap L(n+1)$ conditioned on $\sigma_{\rho}=1$, and for $v\in \{v_1,\ldots,v_{|L(M)|}\}$ 
	let
	$$
	W(v)=f_{n+1-M}(1, \sigma_v^1(n+1)).
	$$
    Then, for $i \in\{1,2,3,4\}$, $W(v)$ would be distributed as
	\begin{equation}
	\label{distribution}
	W(v)\sim f_{n+1-M}(1, \sigma^i(n+1-M)),
	\quad \textup{if}\ \sigma_v^1=i. 
	\end{equation}
	Using the recursive formula in equation \eqref{recursion}, the posterior
	probability of a vertex can be written as a function of the posterior probabilities of its children, so there exists a function $H(W_1,\ldots,W_{|L(M)|})$ such that
	$$
	H(W_1,\ldots,W_{|L(M)|})=f_{n+1}(1,
	\sigma^1(n+1))=\frac{\frac\theta2Z_1}{\frac\theta2Z_1+\frac{\theta}{2}Z_2+\frac{1-\theta}2Z_3+\frac{1-\theta}{2}Z_4},
	$$
	where $W_i=W(v_i)$ for $1\leq i\leq |L(M)|$. We can see that $H$ is a
	continuous function, and when $W_i=\theta/2$ for all $i$
	$$H(W_1,\ldots,W_{|L(M)|})=\theta/2.$$
	Therefore, by Lemma \ref{effectlittle}, if there are at most $k$
	vertices in $L(M)$ such that $W(v)\neq\theta/2$  then
	$$
	\left|H(W_1,\ldots,W_{|L(M)|})-\frac\theta2\right|<\frac\varepsilon2,
	$$
	and thus there exists some $\delta=\delta(\varepsilon)>0$ such that
	if
	$$
	\#\left\{v\in L(M): \left|W(v)-\frac\theta2\right|>\delta\right\}\leq k
	$$
	then
	$$
	\left|H(W_1,\ldots,W_{|L(M)|})-\frac\theta2\right|<\varepsilon.
	$$
	Next by the Chebyshev's inequality and equation \eqref{distribution}, the following holds
	\begin{eqnarray*}
		\mathbf{P}\left(\left|W(v)-\frac\theta2\right|>\delta\right)&\leq&
		\delta^{-2}\sum_{i=1}^{4}\mathbf{E}\left(f_{n+1-M}(1, \sigma^i(n+1-M))-\frac\theta2\right)^2
		\\
		&=&\delta^{-2}(u_{n+1-M; \theta}+v_{n+1-M; \theta}+2w_{n+1-M; \theta})
		\\
		&\leq&\frac{\delta^{-2}}{1-\theta}x_{n+1-M; \theta}.
	\end{eqnarray*}
	For the reason that random variables $\{|W(v)-\theta/2|\}_{v\in \{v_1,\ldots,v_{|L(M)|}\}}$ are
	conditionally independent given $\sigma(M)$, there
	exist suitable constants $C(\theta, \varepsilon, \varrho)$ and
	$N(\theta, \varepsilon)$, such that whenever $n\geq N$,
	\begin{eqnarray*}
		\mathbf{P}\left(\left|\frac{\frac\theta2Z_1}{\frac\theta2Z_1+\frac{\theta}{2}Z_2+\frac{1-\theta}2Z_3+\frac{1-\theta}{2}Z_4}-\frac\theta2\right|>\varepsilon\right)
		&\leq&\mathbf{P}\left(\#\left\{v\in L(M): \left|W(v)-\frac\theta2\right|>\delta\right\}> k\right)
		\\
		&=&\sum_A\mathbf{P}\left(\#\left\{v\in L(M): \left|W(v)-\frac\theta2\right|>\delta\right\}>
		k\mid\sigma(M)=A\right)\mathbf{P}(\sigma(M)=A)
		\\
		&\leq&\sum_A\mathbf{P}\left[\mathbf{Bin}\left(|L(M)|,
		\frac{\delta^{-2}}{1-\theta}x_{n+1-M}\right)>k\right]\mathbf{P}(\sigma(M)=A)
		\\
		&\leq&C'x_{n+1-M; \theta}^6
		\\
		&\leq&Cx_{n;\theta}^6,
	\end{eqnarray*}
	where $\mathbf{Bin}\left(\cdot, \cdot\right)$ denotes the Binomial
	distribution and the last inequality follows from Lemma \ref{ndtf}. Similarly, we can show that, when $j=3, 4$,
	$$
	\mathbf{P}\left(\left|\frac{\frac{1-\theta}2Z_j}{\frac\theta2Z_1+\frac{\theta}{2}Z_2+\frac{1-\theta}2Z_3+\frac{1-\theta}{2}Z_4}-\frac{1-\theta}2\right|>\varepsilon\right)\leq
	Cx_{n;1-\theta}^6.
	$$
	
	For any $\eta>0$, it follows from the Cauchy-Schwarz inequality that
	\begin{eqnarray*}
		|R_z|&=&\left|\mathbf{E}\left(\frac\theta2Z_1+\frac\theta2Z_2+\frac{1-\theta}2Z_3+\frac{1-\theta}2Z_4-1\right)^2\left(\frac{\frac{1-\theta}{2}Z_3}{\frac\theta2Z_1+\frac\theta2Z_2+\frac{1-\theta}2Z_3+\frac{1-\theta}2Z_4}-\frac{1-\theta}2\right)\right|
		\\
		&\leq&\eta
		\mathbf{E}\left(\frac\theta2Z_1+\frac\theta2Z_2+\frac{1-\theta}2Z_3+\frac{1-\theta}2Z_4-1\right)^2
		\\
		&&+\mathbf{E}\left(\frac\theta2Z_1+\frac\theta2Z_2+\frac{1-\theta}2Z_3+\frac{1-\theta}2Z_4-1\right)^2\mathbf{I}\left(\left|\frac{\frac{1-\theta}{2}Z_3}{\frac\theta2Z_1+\frac\theta2Z_2+\frac{1-\theta}2Z_3+\frac{1-\theta}2Z_4}-\frac{1-\theta}2\right|>
		\eta\right)
		\\
		&\leq&\eta
		\mathbf{E}\left(\frac\theta2Z_1+\frac\theta2Z_2+\frac{1-\theta}2Z_3+\frac{1-\theta}2Z_4-1\right)^2
		\\
		&&+\mathbf{P}\left(\left|\frac{\frac{1-\theta}{2}Z_3}{\frac\theta2Z_1+\frac\theta2Z_2+\frac{1-\theta}2Z_3+\frac{1-\theta}2Z_4}-\frac{1-\theta}2\right|>
		\eta\right)^{\frac{1}{2}}\left[\mathbf{E}\left(\frac\theta2Z_1+\frac\theta2Z_2+\frac{1-\theta}2Z_3+\frac{1-\theta}2Z_4-1\right)^4\right]^{\frac{1}{2}}.
	\end{eqnarray*}
	Note that the calculations in Section \ref{Zresults} imply that
	$$
	\mathbf{E}\left(\frac\theta2Z_1+\frac\theta2Z_2+\frac{1-\theta}2Z_3+\frac{1-\theta}2Z_4-1\right)^2\leq C_1\left(x_{n;\theta}^2+x_{n;1-\theta}^2\right)
	$$
	and
	$$ \mathbf{E}\left(\frac\theta2Z_1+\frac\theta2Z_2+\frac{1-\theta}2Z_3+\frac{1-\theta}2Z_4-1\right)^4\leq C_2.
	$$ 
	Thus, there
	exist $C_3=C_3(\theta, \eta, \varrho)$ and $N=N(\theta, \eta)$, such that
	if $n>N$ then
	$$
	\mathbf{P}\left(\left|\frac{\frac{1-\theta}2Z_3}{\frac\theta2Z_1+\frac{\theta}{2}Z_2+\frac{1-\theta}2Z_3+\frac{1-\theta}{2}Z_4}-\frac{1-\theta}2\right|>\varepsilon\right)\leq
	C_3x_{n;1-\theta}^6.
	$$
	Finally, taking $\eta=\frac{\varepsilon}{2C_1}$ and
	$\delta=\frac{\varepsilon}{2\sqrt{C_2C_3}}$, we conclude that if $n\geq N$ and
	$x_{n;\theta}, x_{n;1-\theta}\leq\delta$ then
	\begin{eqnarray*}
		|R_z|\leq\eta C_1x_{n;\theta}^2+C_2C_3x_{n;\theta}^3\leq\varepsilon \left(x_{n;\theta}^2+x_{n;1-\theta}^2\right).
	\end{eqnarray*}
\end{proof}

\begin{lemma}
	\label{concentrationforz} Assume $|\lambda|>\varrho$ for some
	$\varrho>0$. For any $\varepsilon>0$, there exist $N=N(\theta,
	\varepsilon)$ and $\delta=\delta(\theta, \varepsilon, \varrho)$, such
	that if $n\geq N$ and $x_{n;\theta}, x_{n;1-\theta}\leq\delta$ then
	$$
	|V_z|\leq\varepsilon\left(x_{n; \theta}^2+x_{n; 1-\theta}^2\right).
	$$
\end{lemma}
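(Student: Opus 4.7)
Observe first that the bound \eqref{V_z} on $|V_{z}|$ is already of the form $C_{V}x_{n;\theta}^{2}\cdot(\mathrm{bracket}_{1})+C_{V}x_{n;1-\theta}^{2}\cdot(\mathrm{bracket}_{2})$, so it suffices to make each bracket smaller than $\varepsilon/C_{V}$. The additive $x_{n;\theta}$ and $x_{n;1-\theta}$ summands inside the brackets are already $\leq\delta$ by hypothesis, so the real task is to show that the three ratio discrepancies
$$
\left|\frac{u_{n;\theta}}{x_{n;\theta}}-\frac{\theta}{2}\right|,\quad \left|\frac{v_{n;\theta}}{x_{n;\theta}}-\frac{\theta}{2}\right|,\quad \left|\frac{w_{n;1-\theta}}{x_{n;1-\theta}}-\frac{1-\theta}{2}\right|
$$
can be made arbitrarily small by shrinking $\delta$ and enlarging $N$.

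The plan is to rewrite each discrepancy as a third-order central moment of a posterior and then bound that moment by concentration. Mimicking the Bayes-theorem calculation in Lemma~\ref{lemma1}(a), which shows $\mathbf{E}f_{n}(1,\sigma^{1}(n))^{2}=(2/\theta)\mathbf{E}X_{1}(n)^{3}$, a direct expansion in centered variables yields the identities
$$
u_{n;\theta}-\tfrac{\theta}{2}x_{n;\theta}=\tfrac{2}{\theta}\,\mathbf{E}\bigl(X_{1}(n)-\tfrac{\theta}{2}\bigr)^{3},\qquad w_{n;1-\theta}-\tfrac{1-\theta}{2}x_{n;1-\theta}=\tfrac{2}{1-\theta}\,\mathbf{E}\bigl(X_{3}(n)-\tfrac{1-\theta}{2}\bigr)^{3},
$$
together with the analogous mixed identity $v_{n;\theta}-(\theta/2)x_{n;\theta}=(2/\theta)\,\mathbf{E}[(X_{1}(n)-\theta/2)(X_{2}(n)-\theta/2)^{2}]$. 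A naive bound on each third moment by $\sup|X_{i}-\pi_{i}|\leq 1$ times the second moment only gives $O(x_{n;\theta})$ or $O(x_{n;1-\theta})$, which is insufficient; the needed $o(\cdot)$ improvement will come from concentration.

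To obtain that improvement I would transplant the argument used in the proof of Lemma~\ref{concentration}: the $M$-level splitting, together with Lemma~\ref{effectlittle}, Lemma~\ref{ndtf}, Chebyshev's inequality and the binomial tail bound used there, applied to $f_{n}(i,\sigma^{j}(n))$ and then averaged over $\sigma_{\rho}$, yields
$$
\mathbf{P}\bigl(|X_{i}(n)-\pi_{i}|>\eta\bigr)\leq C(\theta,\eta)\bigl(x_{n;\theta}+x_{n;1-\theta}\bigr)^{6},\qquad n\geq N(\theta,\eta),
$$
with $\pi_{i}=\theta/2$ for $i\in\{1,2\}$ and $\pi_{i}=(1-\theta)/2$ for $i\in\{3,4\}$. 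Splitting each third-moment expectation on the event that the posteriors involved lie within $\eta$ of their stationary values versus its complement gives, for instance,
$$
\bigl|\mathbf{E}(X_{1}-\tfrac{\theta}{2})^{3}\bigr|\leq \eta\,\mathbf{E}(X_{1}-\tfrac{\theta}{2})^{2}+\mathbf{P}\!\left(|X_{1}-\tfrac{\theta}{2}|>\eta\right)\leq \tfrac{\eta\theta}{2}\,x_{n;\theta}+C\delta^{5}\bigl(x_{n;\theta}+x_{n;1-\theta}\bigr),
$$
and similarly for the mixed and $w$-cases. Choosing first $\eta$ small and then $\delta$ small makes each ratio discrepancy smaller than $\varepsilon/(3C_{V})$; combined with the $\delta$-bound on the additive terms and substituted back into \eqref{V_z}, this completes the proof.

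The main technical obstacle is the concentration step for the mixed third moment arising in the $v$-identity, which requires \emph{simultaneous} proximity of $X_{1}(n)$ and $X_{2}(n)$ to the common value $\theta/2$ and so must be handled via a union bound on two tail events. A secondary difficulty is that the asymmetry between the communities $\{1,2\}$ and $\{3,4\}$, with distinct stationary values $\theta/2$ and $(1-\theta)/2$, prevents reducing the $w$-identity to the $u$-identity by symmetry, so its concentration bound has to be established independently while carefully tracking the $\theta$- and $(1-\theta)$-dependent constants throughout.
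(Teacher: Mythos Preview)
Your approach is correct in outline but genuinely different from the paper's. The paper does not reuse the concentration machinery of Lemma~\ref{concentration} at all; instead it derives, via the expansion identity~\eqref{identity}, a one-step recursion for the discrepancy itself,
\[
u_{n+1;\theta}-\tfrac{\theta}{2}x_{n+1;\theta}=d\lambda^{3}\bigl(u_{n;\theta}-\tfrac{\theta}{2}x_{n;\theta}\bigr)+O_{\theta}(x_{n;\theta}^{2}),
\]
divides by $x_{n+1;\theta}$ and iterates $k$ times. Using $x_{n}/x_{n+1}\le \tfrac{4}{3}(d\lambda^{2})^{-1}$ (from~\eqref{eq:xn_ineq}) the effective contraction factor per step is at most $\tfrac{4}{3}|\lambda|\le \tfrac{4}{3}d^{-1/2}<1$, so after finitely many steps $\bigl|u_{n}/x_{n}-\theta/2\bigr|<\varepsilon$; the $v$- and $w$-ratios are handled identically. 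This recursive route never leaves the moment calculus already set up in Section~\ref{Sec:Distributional_Recursion}, whereas your route is more probabilistic but requires transplanting the full $M$-level/binomial-tail argument of Lemma~\ref{concentration} to the \emph{unconditioned} posteriors $X_{i}(n)$.

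There is one genuine error in your sketch: the $w$-identity is wrong. Since $w_{n;1-\theta}=\mathbf{E}\bigl(f_{n}(3,\sigma^{1}(n))-\tfrac{1-\theta}{2}\bigr)^{2}$ is computed under $\sigma_{\rho}=1$ (not $\sigma_{\rho}=3$), the Bayes manipulation produces a \emph{mixed} moment,
\[
w_{n;1-\theta}-\tfrac{1-\theta}{2}x_{n;1-\theta}=\tfrac{2}{\theta}\,\mathbf{E}\!\left[\bigl(X_{1}-\tfrac{\theta}{2}\bigr)\bigl(X_{3}-\tfrac{1-\theta}{2}\bigr)^{2}\right],
\]
not $\tfrac{2}{1-\theta}\mathbf{E}(X_{3}-\tfrac{1-\theta}{2})^{3}$; what you wrote is actually the formula for $u_{n;1-\theta}-\tfrac{1-\theta}{2}x_{n;1-\theta}$. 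This does not destroy your strategy: the corrected mixed moment is controlled exactly as in your $v$-case, splitting on $\{|X_{1}-\theta/2|>\eta\}$ and absorbing the cross-term $x_{n;\theta}^{6}x_{n;1-\theta}$ into $\delta^{5}(x_{n;\theta}^{2}+x_{n;1-\theta}^{2})$. Incidentally, no ``simultaneous proximity'' union bound is needed for the $v$-case either; splitting on the single event $\{|X_{1}-\theta/2|>\eta\}$ already extracts the factor of~$\eta$ in front of $\mathbf{E}(X_{2}-\theta/2)^{2}=\tfrac{\theta}{2}x_{n;\theta}$.
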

\begin{proof}
	Plugging $$a=\left(Z_1-\left(\frac\theta2Z_1+\frac\theta2Z_2+\frac{1-\theta}2Z_3+\frac{1-\theta}2Z_4\right)\right)^2,\quad r=\left(\frac\theta2Z_1+\frac\theta2Z_2+\frac{1-\theta}2Z_3+\frac{1-\theta}2Z_4\right)^2-1, \quad s=1$$ in equation \eqref{identity}, we have
	\begin{eqnarray}
	\label{w}
	u_{n+1; \theta}&=&\mathbf{E}\left(f_{n+1}(1, \sigma^1(n+1))-\frac{\theta}{2}\right)^2\nonumber
	\\
	&=&\mathbf{E}\left(\frac{\frac{\theta}{2}Z_1}{\frac\theta2Z_1+\frac\theta2Z_2+\frac{1-\theta}2Z_3+\frac{1-\theta}2Z_4}-\frac{\theta}{2}\right)^2\nonumber
	\\
	&=&\frac{\theta^2}{4}\mathbf{E}\left(Z_1-\left(\frac\theta2Z_1+\frac\theta2Z_2+\frac{1-\theta}2Z_3+\frac{1-\theta}2Z_4\right)\right)^2\nonumber
	\\
	&&-\frac{\theta^2}{4}\mathbf{E}\left(Z_1-\left(\frac\theta2Z_1+\frac\theta2Z_2+\frac{1-\theta}2Z_3+\frac{1-\theta}2Z_4\right)\right)^2\left(\left(\frac\theta2Z_1+\frac\theta2Z_2+\frac{1-\theta}2Z_3+\frac{1-\theta}2Z_4\right)^2-1\right)\nonumber
	\\
	&&+\mathbf{E}\left(\left(\frac\theta2Z_1+\frac\theta2Z_2+\frac{1-\theta}2Z_3+\frac{1-\theta}2Z_4\right)^2-1\right)^2\left(\frac{\frac{\theta}{2}Z_1}{\frac\theta2Z_1+\frac\theta2Z_2+\frac{1-\theta}2Z_3+\frac{1-\theta}2Z_4}-\frac{\theta}{2}\right)^2\nonumber
	\\
	&=&\frac\theta2 x_{n+1;\theta}+d\lambda^3\left(u_{n;\theta}-\frac{\theta}{2}x_{n;\theta}\right)+O_\theta(x_{n;\theta}^2).
	\end{eqnarray}
	By equation \eqref{eq:xn_ineq}, there exist
	constants $N_1=N_1(\theta)$ and $\delta_1=\delta_1(\theta)$,
	such that if $n\geq N_1$ and $x_{n;\theta}\leq\delta_1$ then
	\begin{eqnarray*}
		\frac{x_{n;\theta}}{x_{n+1;\theta}}
		\leq\frac{x_{n;\theta}}{\left(1-\frac14\right)d\lambda^2x_{n;\theta}}
		=\frac{4}{3}\frac{1}{d\lambda^2}.
	\end{eqnarray*}
	For fixed $k$, it is known that there exists a $\delta_2=\delta_2(\theta, \varrho,
	k)<\delta_1$, such that if $x_{n;\theta}<\delta_2$ then for any $1\leq\ell\leq k$ one has
	$x_{n+\ell; \theta}<2\delta_2$. Therefore, for any
	positive integer $k$, equation \eqref{w} yields
	\begin{eqnarray*}
		\frac{u_{n+k; \theta}}{x_{n+k; \theta}}-\frac{\theta}{2}&=&d\lambda^3\frac{x_{n+k-1; \theta}}{x_{n+k; \theta}}\left(\frac{u_{n+k-1; \theta}}{x_{n+k-1; \theta}}-\frac{\theta}{2}\right)+O_\theta\left(x_{n+k-1; \theta}\frac{x_{n+k-1; \theta}}{x_{n+k; \theta}}\right)
		\\
		&=&(d\lambda^3)^k\left(\prod_{\ell=1}^k\frac{x_{n+\ell-1; \theta}}{x_{n+\ell; \theta}}\right)\left(\frac{u_{n;\theta}}{x_{n;\theta}}-\frac{\theta}{2}\right)+R,
	\end{eqnarray*}
	where
	\begin{eqnarray*}
		\left|(d\lambda^3)^k\left(\prod_{\ell=1}^k\frac{x_{n+\ell-1; \theta}}{x_{n+\ell; \theta}}\right)\right|
		\leq|d\lambda^3|^k\left(\frac{4}{3d\lambda^2}\right)^k=
		\left(\frac{4}{3}|\lambda|\right)^k
	\end{eqnarray*}
	and
	$$
	|R|\leq
	2C\delta_3\left(\sum_{i=1}^k\left(\frac{4}{3d\lambda^2}\right)^i|d\lambda^3|^{i-1}\right)
	\leq2C\delta_3\frac{1-\left(\frac{4}{3}|\lambda|\right)^k}{1-\frac{4}{3}|\lambda|}\frac{4}{3d\lambda^2}
	$$
	with $C$ denoting the $O_\theta$ constant in equation \eqref{w}. By Lemma \ref{lemma1}, it is easy to obtain
	$0\leq\frac{u_{n;\theta}}{x_{n;\theta}}\leq1$, which implies
	$$\left|\frac{u_{n;\theta}}{x_{n;\theta}}-\frac{\theta}{2}\right|\leq 1.$$ Noticing the fact that $|\lambda|\leq d^{-1/2}\leq
	1/\sqrt{2}$, we achieve $\frac{4}{3}|\lambda|<1$. Taking $k=k(\varepsilon)$ sufficiently large and
	$\delta_3=\delta_3(\theta, k, \varepsilon)=\delta_3(\theta,
	\varepsilon)<\delta_2$ sufficiently small, we have
	$$\left|\frac{u_{n+k; \theta}}{x_{n+k;\theta}}-\frac{\theta}{2}\right|<\varepsilon.$$
	Finally, in view of $|\lambda|>\varrho$ and by Lemma \ref{ndtf}, there exists
	$\gamma=\gamma(\theta, \varrho)$ such that
	$x_{n-k; \theta}\leq \gamma^{-k}x_{n;\theta}$, and then by choosing $N=N(\theta,
	\varepsilon, k)=N(\theta, \varepsilon)>N_1+k$ and
	$\delta=\gamma^k\delta_3$, if $x_{n;\theta}\leq\delta$ and $n\geq N$ one has
	\begin{equation}
	\label{wnxn} \left|\frac{u_{n;\theta}}{x_{n;\theta}}-\frac{\theta}{2}\right|< \varepsilon.
	\end{equation}
	
	Similar discussions yield 
	$$
	\left|\frac{v_{n;\theta}}{x_{n;\theta}}-\frac{\theta}{2}\right|< \varepsilon\quad\text{and}\quad \left|\frac{w_{n;1-\theta}}{x_{n;1-\theta}}-\frac{1-\theta}{2}\right|< \varepsilon
	$$ 
	and thus we complete the proof. 
\end{proof}

\section {Proof of the Main Theorem}
\label{Sec:Proof_of_Main_Theorem}
It follows from Lemma \ref{lemma1} that
$$
x_{n;1-\theta}^2+y_{n;1-\theta}^2\geq\frac12\left(x_{n;1-\theta}+y_{n;1-\theta}\right)^2=\frac{2\theta^2}{(1-\theta)^2}z_{n;1-\theta}^2
$$
and equation \eqref{eq:Z_dynamics} implies
\begin{equation*}
	\mathcal{Z}_{n+1;\theta}
	\geq d\lambda^2\mathcal{Z}_{n; \theta}+\frac{d(d-1)}{2}\lambda^4\left[\frac{2(1-\theta)^2}{\theta} \left(x_{n;\theta}-\mathcal{Z}_{n;\theta}\right)^2+\left(\frac4\theta-\frac{4}{1-\theta}-8-\frac{2(1-\theta)^2}{\theta}+\frac{2\theta^3}{(1-\theta)^2}\right)\mathcal{Z}_{n; \theta}^2\right]
	+R_z+V_z.
\end{equation*}
Solving $
\left(\frac4\theta-\frac{4}{1-\theta}-8-\frac{2(1-\theta)^2}{\theta}+\frac{2\theta^3}{(1-\theta)^2}\right)>0,
$
one has
$$
\frac{3+\sqrt{3}}6<\theta\leq1\quad\textup{or}\quad 0\leq\theta<\frac{3-\sqrt{3}}6.
$$
For fixed $\theta\in (\frac{3+\sqrt{3}}6,1]$, there exists $\frac12<\zeta<1$ such that
$$
\frac4\theta-\frac{4}{1-\theta}-8-\frac{2(1-\theta)^2}{\zeta\theta}+\frac{2\zeta\theta^3}{(1-\theta)^2}>0.
$$
To investigate the non-tightness, it would be convenient to assume that $d\lambda^2\geq\frac12$, say,
$|\lambda|\geq\frac{1}{\sqrt{2d}}$. We take $\varrho=\frac{1}{\sqrt{2d}}$
in Lemma \ref{ndtf} and then get $\gamma=\gamma(\theta, d)$. By
Lemma \ref{concentration} and Lemma \ref{concentrationforz}, there
exist $N=N(\theta, \zeta)$ and $\delta=\delta(\theta, d, \zeta)>0$, such that if $n\geq
N$ and $x_{n;\theta}\leq\delta$ then 
\begin{equation}
\label{6.1}
|R_z+V_z|\leq\frac1{16}\frac{2(1-\theta)^2}{\theta}(1-\zeta)x_{n;\theta}^2+\frac{\theta}{16}(1-\zeta)x_{n;1-\theta}^2.
\end{equation}

To accomplish the proof, it suffices to show that when $d\lambda^2$
is close enough to $1$, at least one of $x_{n;\theta}$ and $x_{n;1-\theta}$ does not converge to $0$. 
We apply reductio ad absurdum, by assuming that
\begin{equation}
\label{assumption}
\lim_{n\to\infty}x_{n;\theta}=\lim_{n\to\infty}x_{n;1-\theta}=0.
\end{equation} 
Thus, there exists $\mathcal{N}=\mathcal{N}(\theta, d, \zeta)>N(\theta, \zeta)$, such that whenever $n>\mathcal{N}$, we have $x_{n;\theta}<\delta$.
Denote 
$$
\Gamma=\min\left\{\zeta\frac{2(1-\theta)^2}{\theta},\ \zeta^2\left(\frac4\theta-\frac{4}{1-\theta}-8-\frac{2(1-\theta)^2}{\zeta\theta}+\frac{2\zeta\theta^3}{(1-\theta)^2}\right)\right\}>0
$$
and note that
$$
\frac{d(d-1)}{2}\lambda^4\geq\left(\frac{d\lambda^2}2\right)^2\geq\frac1{16}.
$$
Consequently, by equations \eqref{eq:Z_dynamics} and \eqref{6.1}, it is concluded that if $n>\mathcal{N}$
\begin{equation}
\begin{split}
\label{zbound}
\mathcal{Z}_{n+1;\theta}\geq& d\lambda^2\mathcal{Z}_{n; \theta}+\frac{d(d-1)}{2}\lambda^4\vast[\zeta\frac{2(1-\theta)^2}{\theta} x_{n;\theta}^2-\frac{4(1-\theta)^2}{\theta}x_{n;\theta}\mathcal{Z}_{n; \theta}+\left(\frac4\theta-\frac{4}{1-\theta}-8\right)\mathcal{Z}_{n; \theta}^2+\zeta\theta\left(x_{n;1-\theta}^2+y_{n;1-\theta}^2\right)\vast]
\\
\geq&d\lambda^2\mathcal{Z}_{n; \theta}+\frac{d(d-1)}{2}\lambda^4\vast[\zeta\frac{2(1-\theta)^2}{\theta} \left(x_{n;\theta}-\frac{\mathcal{Z}_{n; \theta}}\zeta\right)^2+\zeta^2\vast(\frac4\theta-\frac{4}{1-\theta}-8-\frac{2(1-\theta)^2}{\zeta\theta}+\frac{2\zeta\theta^3}{(1-\theta)^2}\vast)\left(\frac{\mathcal{Z}_{n;\theta}}\zeta\right)^2\vast]
\\
\geq&d\lambda^2\mathcal{Z}_{n; \theta}+\frac{d(d-1)}{4}\lambda^4\Gamma\left(x_{n;\theta}-\frac{\mathcal{Z}_{n; \theta}}{\zeta}+\frac{\mathcal{Z}_{n;\theta}}{\zeta}\right)^2
\\
\geq&\mathcal{Z}_{n; \theta}\left[d\lambda^2+\frac{d(d-1)}{4}\lambda^4\Gamma x_{n;\theta}\right].
\end{split}
\end{equation}
From the second inequality of equation \eqref{zbound}, we can also conclude 
\begin{equation}
\label{alternative}
\mathcal{Z}_{n+1;\theta}\geq d\lambda^2\mathcal{Z}_{n; \theta}^2+\left(\frac{d\lambda^2}2\right)^2\zeta\frac{2(1-\theta)^2}{\theta} \left(x_{n;\theta}-\frac{\mathcal{Z}_{n; \theta}}\zeta\right)^2\geq \frac18\left(\frac{\mathcal{Z}_{n;\theta}}\zeta\right)^2+\frac{(1-\theta)^2}{16\theta}\left(x_{n;\theta}-\frac{\mathcal{Z}_{n; \theta}}\zeta\right)^2\geq \xi x_{n; \theta}^2,
\end{equation}
where $\xi$ depends only on $\theta$.

Considering the initial point $x_0=1-\frac{\theta}{2}>0$ and by
Lemma \ref{ndtf}, we have
$ x_{n;\theta}\geq x_0\gamma^n$. Define $\varepsilon=\varepsilon(\theta, d, \zeta)=\xi\left(x_0\gamma^{\mathcal{N}}\right)^2> 0$, choose suitable
$|\lambda|<d^{-\frac{1}{2}}$, and then we have
\begin{equation}
\label{inequality}
d\lambda^2+\frac{d(d-1)}{4}\lambda^4\Gamma\varepsilon>1.
\end{equation}
It is easy to see that equation \eqref{alternative} implies that $\mathcal{Z}_{\mathcal{N}+1}\geq
\xi x_{\mathcal{N}}^2\geq
\varepsilon$. Suppose $\mathcal{Z}_n\geq \varepsilon$ for some
$n> \mathcal{N}$, and then it follows from equations \eqref{zbound}
and \eqref{inequality} that
\begin{eqnarray*}
	x_{n+1;\theta}\geq \mathcal{Z}_{n+1;\theta} \geq
	\mathcal{Z}_{n; \theta}\left[d\lambda^2+\frac{d(d-1)}{4}\lambda^4\Gamma\varepsilon\right]
	> \mathcal{Z}_{n; \theta} \geq\varepsilon.
\end{eqnarray*}
Therefore, by induction we have $x_{n;\theta}\geq \mathcal{Z}_{n; \theta} \geq\varepsilon$ for all $n>\mathcal{N}$,
which contradicts to the assumption imposed in equation \eqref{assumption}. Thus, the proof of Theorem \ref{reconstruction} is completed.

\begin{acknowledgements}
	We would like to thank Joseph Felsenstein and S{\'e}bastien Roch for conversations on this subject. 
\end{acknowledgements}


\bibliographystyle{spbasic}      
\bibliography{\jobname}

\begin{thebibliography}{28}
\providecommand{\natexlab}[1]{#1}
\providecommand{\url}[1]{\texttt{#1}}
\expandafter\ifx\csname urlstyle\endcsname\relax
  \providecommand{\doi}[1]{doi: #1}\else
  \providecommand{\doi}{doi: \begingroup \urlstyle{rm}\Url}\fi

\bibitem[Berger et~al.(2005)Berger, Kenyon, Mossel, and
  Peres]{berger2005glauber}
Noam Berger, Claire Kenyon, Elchanan Mossel, and Yuval Peres.
\newblock Glauber dynamics on trees and hyperbolic graphs.
\newblock \emph{Probability Theory and Related Fields}, 131\penalty0
  (3):\penalty0 311--340, 2005.

\bibitem[Bhamidi et~al.(2010)Bhamidi, Rajagopal, and Roch]{bhamidi2010network}
Shankar Bhamidi, Ram Rajagopal, and S{\'e}bastien Roch.
\newblock Network delay inference from additive metrics.
\newblock \emph{Random Structures \& Algorithms}, 37\penalty0 (2):\penalty0
  176--203, 2010.

\bibitem[Bleher et~al.(1995)Bleher, Ruiz, and Zagrebnov]{bleher1995purity}
Pavel~M Bleher, Jean Ruiz, and Valentin~A Zagrebnov.
\newblock On the purity of the limiting {G}ibbs state for the {I}sing model on
  the bethe lattice.
\newblock \emph{Journal of Statistical Physics}, 79\penalty0 (1-2):\penalty0
  473--482, 1995.

\bibitem[Borgs et~al.(2006)Borgs, Chayes, Mossel, and Roch]{borgs2006kesten}
Christian Borgs, Jennifer Chayes, Elchanan Mossel, and S{\'e}bastien Roch.
\newblock The {K}esten-{S}tigum reconstruction bound is tight for roughly
  symmetric binary channels.
\newblock In \emph{Foundations of Computer Science, 2006. FOCS'06. 47th Annual
  IEEE Symposium on}, pages 518--530. IEEE, 2006.

\bibitem[Chayes et~al.(1986)Chayes, Chayes, Sethna, and
  Thouless]{chayes1986mean}
JT~Chayes, L~Chayes, James~P Sethna, and DJ~Thouless.
\newblock A mean field spin glass with short-range interactions.
\newblock \emph{Communications in Mathematical Physics}, 106\penalty0
  (1):\penalty0 41--89, 1986.

\bibitem[Daskalakis et~al.(2006)Daskalakis, Mossel, and
  Roch]{daskalakis2006optimal}
Constantinos Daskalakis, Elchanan Mossel, and S{\'e}bastien Roch.
\newblock Optimal phylogenetic reconstruction.
\newblock In \emph{Proceedings of the thirty-eighth annual ACM symposium on
  Theory of computing}, pages 159--168. ACM, 2006.

\bibitem[Evans et~al.(2000)Evans, Kenyon, Peres, and
  Schulman]{evans2000broadcasting}
William Evans, Claire Kenyon, Yuval Peres, and Leonard~J Schulman.
\newblock Broadcasting on trees and the {I}sing model.
\newblock \emph{Annals of Applied Probability}, pages 410--433, 2000.

\bibitem[Felsenstein(1981)]{felsenstein1981evolutionary}
Joseph Felsenstein.
\newblock Evolutionary trees from {DNA} sequences: a maximum likelihood
  approach.
\newblock \emph{Journal of molecular evolution}, 17\penalty0 (6):\penalty0
  368--376, 1981.

\bibitem[Felsenstein(2004)]{felsenstein2004inferring}
Joseph Felsenstein.
\newblock \emph{Inferring phylogenies}, volume~2.
\newblock Sinauer associates Sunderland, MA, 2004.

\bibitem[Georgii(2011)]{georgii2011gibbs}
Hans-Otto Georgii.
\newblock \emph{Gibbs measures and phase transitions}, volume~9.
\newblock Walter de Gruyter, 2011.

\bibitem[Kesten and Stigum(1966)]{kesten1966additional}
Harry Kesten and Bernt~P Stigum.
\newblock Additional limit theorems for indecomposable multidimensional
  galton-watson processes.
\newblock \emph{The Annals of Mathematical Statistics}, 37\penalty0
  (6):\penalty0 1463--1481, 1966.

\bibitem[Kesten and Stigum(1967)]{kesten1967limit}
Harry Kesten and Bernt~P Stigum.
\newblock Limit theorems for decomposable multi-dimensional galton-watson
  processes.
\newblock \emph{Journal of Mathematical Analysis and Applications}, 17\penalty0
  (2):\penalty0 309--338, 1967.

\bibitem[Kimura(1980)]{kimura1980simple}
Motoo Kimura.
\newblock A simple method for estimating evolutionary rates of base
  substitutions through comparative studies of nucleotide sequences.
\newblock \emph{Journal of molecular evolution}, 16\penalty0 (2):\penalty0
  111--120, 1980.

\bibitem[Liu and Ning(2018)]{liu2018large}
Wenjian Liu and Ning Ning.
\newblock Large degree asymptotics and the reconstruction threshold of the
  asymmetric binary channels.
\newblock \emph{https://arxiv.org/abs/1812.06039}, 2018.

\bibitem[Liu et~al.(2018)Liu, Jammalamadaka, and Ning]{liu2018tightness}
Wenjian Liu, Sreenivasa~Rao Jammalamadaka, and Ning Ning.
\newblock The tightness of the {K}esten-{S}tigum reconstruction bound of
  symmetric model with multiple mutations.
\newblock \emph{Journal of Statistical Physics}, 170\penalty0 (3):\penalty0
  617--641, 2018.

\bibitem[Martinelli et~al.(2007)Martinelli, Sinclair, and
  Weitz]{martinelli2007fast}
Fabio Martinelli, Alistair Sinclair, and Dror Weitz.
\newblock Fast mixing for independent sets, colorings, and other models on
  trees.
\newblock \emph{Random Structures \& Algorithms}, 31\penalty0 (2):\penalty0
  134--172, 2007.

\bibitem[M{\'e}zard and Montanari(2006)]{mezard2006reconstruction}
Marc M{\'e}zard and Andrea Montanari.
\newblock Reconstruction on trees and spin glass transition.
\newblock \emph{Journal of statistical physics}, 124\penalty0 (6):\penalty0
  1317--1350, 2006.

\bibitem[Mossel(2001)]{mossel2001reconstruction}
Elchanan Mossel.
\newblock Reconstruction on trees: beating the second eigenvalue.
\newblock \emph{Annals of Applied Probability}, pages 285--300, 2001.

\bibitem[Mossel(2004{\natexlab{a}})]{mossel2004phase}
Elchanan Mossel.
\newblock Phase transitions in phylogeny.
\newblock \emph{Transactions of the American Mathematical Society},
  356\penalty0 (6):\penalty0 2379--2404, 2004{\natexlab{a}}.

\bibitem[Mossel(2004{\natexlab{b}})]{mossel2004survey}
Elchanan Mossel.
\newblock Survey: information flow on trees.
\newblock \emph{DIMACS series in discrete mathematics and theoretical computer
  science}, 63:\penalty0 155--170, 2004{\natexlab{b}}.

\bibitem[Mossel(2016)]{mossel2016deep}
Elchanan Mossel.
\newblock Deep learning and hierarchal generative models.
\newblock \emph{arXiv preprint arXiv:1612.09057}, 2016.

\bibitem[Mossel et~al.(2013)Mossel, Neeman, and Sly]{mossel2013proof}
Elchanan Mossel, Joe Neeman, and Allan Sly.
\newblock A proof of the block model threshold conjecture.
\newblock \emph{Combinatorica}, pages 1--44, 2013.

\bibitem[Mossel et~al.(2014)Mossel, Neeman, and Sly]{mossel2014belief}
Elchanan Mossel, Joe Neeman, and Allan Sly.
\newblock Belief propagation, robust reconstruction and optimal recovery of
  block models.
\newblock In \emph{Conference on Learning Theory}, pages 356--370, 2014.

\bibitem[Neeman and Netrapalli(2014)]{neeman2014non}
Joe Neeman and Praneeth Netrapalli.
\newblock Non-reconstructability in the stochastic block model.
\newblock \emph{arXiv preprint arXiv:1404.6304}, 2014.

\bibitem[Ricci-Tersenghi et~al.(2018)Ricci-Tersenghi, Semerjian, and
  Zdeborova]{ricci2018typology}
Federico Ricci-Tersenghi, Guilhem Semerjian, and Lenka Zdeborova.
\newblock Typology of phase transitions in {B}ayesian inference problems.
\newblock \emph{arXiv preprint arXiv:1806.11013}, 2018.

\bibitem[Roch(2006)]{roch2006short}
Sebastien Roch.
\newblock A short proof that phylogenetic tree reconstruction by maximum
  likelihood is hard.
\newblock \emph{IEEE/ACM Transactions on Computational Biology and
  Bioinformatics}, 3\penalty0 (1):\penalty0 92--94, 2006.

\bibitem[Sly(2011)]{sly2009reconstruction}
Allan Sly.
\newblock Reconstruction for the {P}otts model.
\newblock \emph{The Annals of Probability}, 39:\penalty0 1365--1406, 2011.

\bibitem[Tetali et~al.(2012)Tetali, Vera, Vigoda, and Yang]{tetali2012phase}
Prasad Tetali, Juan~C Vera, Eric Vigoda, and Linji Yang.
\newblock Phase transition for the mixing time of the glauber dynamics for
  coloring regular trees.
\newblock \emph{Ann. Appl. Probab.}, 22:\penalty0 2210--2239., 2012.

\end{thebibliography}

\end{document}